\newtheoremstyle{mystyle}{}{}{\slshape}{2pt}{\scshape}{.}{ }{} 
\newtheorem{thm}{Theorem}[section]
\newtheorem{corollary}[thm]{Corollary}
\newtheorem{cor}[thm]{Corollary}
\newtheorem{prop}[thm]{Proposition}
\newtheorem{lemme}[thm]{Lemma}
\newtheorem{lemma}[thm]{Lemma}
\newtheorem{fact}[thm]{Fact}
\newtheorem{conjecture}[thm]{Conjecture}
\theoremstyle{definition}
\newtheorem{defi}[thm]{Definition}
\theoremstyle{mystyle}
\theoremstyle{remark}
\newcommand{\impl}{\rightarrow}
\newcommand{\ignore}[1]{}
\DeclareMathOperator{\tp}{tp}
\DeclareMathOperator{\dcl}{dcl}
\DeclareMathOperator{\acl}{acl}
\DeclareMathOperator{\aut}{Aut}
\newcommand{\tleq}{\unlhd}
\def\indsym#1#2{%
 \setbox0=\hbox{$\m@th#1x$}%
 \kern\wd0%
 \hbox to 0pt{\hss$\m@th#1\mid$\hbox to 0pt{$\m@th#1^{#2}$\hss}\hss}%
 \lower.9\ht0\hbox to 0pt{\hss$\m@th#1\smile$\hss}%
 \kern\wd0}
\def\nindsym#1#2{%
 \setbox0=\hbox{$\m@th#1x$}%
 \kern\wd0%
 \hbox to 0pt{\hss$\m@th#1\not$\kern1.4\wd0\hss}
 \hbox to 0pt{\hss$\m@th#1\mid$\hbox to 0pt{$\m@th#1^{#2}$\hss}\hss}%
 \lower.9\ht0\hbox to 0pt{\hss$\m@th#1\smile$\hss}%
 \kern\wd0}
\title{On $\omega$-categorical structures with few finite substructures}
\author{Pierre Simon\footnote{Partially supported by NSF (grant no. 1665491) and a Sloan fellowship.}}
\date{\today}
\begin{document}
\maketitle

\begin{abstract}
We establish new results on the possible growth rates for the sequence $(f_n)$ counting the number of orbits of a given oligomorphic group on unordered sets of size $n$. Macpherson showed that for primitive actions, the growth is at least exponential (if the sequence is not constant equal to 1). The best lower bound previously known for the base of the exponential was obtained by Merola. We establishing the optimal value of 2 in the case where the structure is unstable. This allows us to improve on Merola's bound and also obtain the optimal value for structures homogeneous in a finite relational language. Finally, we show that the study of sequences $(f_n)$ of sub-exponential growth reduces to the $\omega$-stable case.
\end{abstract}

\section{Introduction}

A permutation group $G$ acting on an infinite set $\Omega$ is said to be \emph{oligomorphic} if the number of orbits on $\Omega^n$ is finite for all integers $n$. Given such an action, let $f_n$ denote the number of orbits of $G$ on (unordered) subsets of $\Omega$ of size $n$. The behavior of the sequence $(f_n)$ has been studied in a number of papers, most notably by Cameron and Macpherson. For instance, Cameron has shown that it is non-decreasing and can have arbitrary fast growth (see \cite[Chapter 3]{Cameron:oligomorphic}; the first statement is also proved by Pouzet \cite{pouzet}). Permutation groups for which $f_n=1$ for all $n$ were classified by Cameron \cite{Cameron:linear}. There are exactly 5 of them (here and throughout, we take $\Omega$ to be countable): the full symmetric group, the group of order preserving (resp. order preserving and reversing) permutations of $(\mathbb Q,\leq)$, the group of order preserving (resp. order preserving and reversing) permutations of a dense circular order.  The corresponding structures are precisely the reducts of dense linear order (DLO). After that, there is a gap in the possible growth rates.

\begin{thm}[Macpherson \cite{Mac:orbits}]\label{th:macpherson}
	There is a constant $c>1$ such that if $G$ is a primitive oligomorphic group, then either $f_n$ is constant equal to 1, or
	\[f_n \geq \frac {c^n}{p(n)},\] for some polynomial $p$.
\end{thm}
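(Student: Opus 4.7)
The plan is to work with the canonical $\omega$-categorical structure $M$ on $\Omega$ for which $\aut(M)=G$, so that orbits of $G$ on $n$-subsets correspond to isomorphism types of $n$-element substructures of $M$. Assume $f_n$ is not constantly $1$; since the sequence is non-decreasing and $f_1=1$ by primitivity, there is a least integer $k$ with $f_k\geq 2$. Fix two non-isomorphic $k$-element substructures $A_0, A_1\subset M$; by minimality of $k$, all proper subsets of $A_0$ and $A_1$ share a common isomorphism type.

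The aim is then to construct, for each binary word $s\in\{0,1\}^N$, a substructure $B_s\subset M$ of size $kN$ obtained by assembling disjoint copies of $A_{s(1)},\ldots,A_{s(N)}$, in such a way that the map $s\mapsto B_s$ produces at least $2^N/q(N)$ distinct isomorphism types for some polynomial $q$. Together with the non-decreasing monotonicity of $(f_n)$ (Cameron, Pouzet), this would propagate to $f_n\geq c^n/p(n)$ with $c=2^{1/k}$ and $p$ derived from $q$.

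For the construction, primitivity is essential. It forbids non-trivial $G$-invariant equivalence relations on $\Omega$, and, coupled with $\omega$-categoricity, it provides an independence-like statement: given a finite $C\subset M$, the pointwise stabiliser of $C$ has an infinite orbit outside $\acl(C)$, so one may extend $C$ by a copy of $A_0$ or $A_1$ placed in a chosen ``generic'' position relative to $C$. Iterating $N$ times, with the bit $s(i)$ dictating which of $A_0$ or $A_1$ is used at stage $i$, yields the family $(B_s)_{s\in\{0,1\}^N}$.

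The main obstacle is showing that distinct $s$ yield non-isomorphic $B_s$ up to polynomial error. I would proceed by contradiction: if too many $B_s$ share a common isomorphism type, a Ramsey-theoretic extraction on the coordinates yields an infinite family of pairwise ``interchangeable'' copies of $A_0$ and $A_1$ inside $M$, from which one distils either a non-trivial $G$-invariant equivalence relation on $\Omega$ (contradicting primitivity) or a $G$-invariant linear or circular pre-order, which by Cameron's classification forces $M$ itself to be a reduct of DLO and hence $f_n\equiv 1$, contradicting $f_k\geq 2$. The polynomial loss $p(n)$ simply records the bounded number of coordinates of $s$ that automorphisms of $B_s$ are allowed to permute.
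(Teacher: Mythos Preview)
First, note that this theorem is a cited result of Macpherson and is not proved in the present paper; it is invoked as a black box. So there is no ``paper's own proof'' to compare against, and your proposal is really an attempt to reconstruct Macpherson's original argument.

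On its merits, the proposal has a genuine gap at the step you yourself flag as the main obstacle. You build $B_s$ by adjoining copies of $A_0$ or $A_1$ in ``generic position'', but you never specify what generic means here: in an arbitrary $\omega$-categorical primitive structure there is no canonical free amalgam, and different choices of how the new copy sits over the previously built set $C$ can give many isomorphism types for $B_s$ even for a fixed $s$. More seriously, even granting a uniform construction, you give no mechanism to recover $s$ from the isomorphism type of $B_s$. An isomorphism $B_s\cong B_{s'}$ need not respect your block decomposition into $k$-sets, so the bits are not visibly encoded. Your proposed Ramsey extraction---passing from many coincidences to ``interchangeable copies of $A_0$ and $A_1$'' and thence to a definable equivalence relation or order---is the entire content of the theorem, and nothing in the sketch explains why such coincidences should produce an invariant binary relation of either kind. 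Concretely, in the random graph with $k=2$, $A_0$ a non-edge and $A_1$ an edge, the most natural ``generic'' assembly (no edges between blocks) yields $B_s$ depending only on the number of $1$'s in $s$, so the map $s\mapsto B_s$ collapses to linearly many types; yet no contradiction with primitivity arises from this.

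Macpherson's actual argument is not a single encoding of this shape. It proceeds by a case division on the degree of transitivity and homogeneity of $G$ (2-transitive versus 2-homogeneous not 2-transitive versus not 2-homogeneous), handling each case with different combinatorial objects (tournaments, binary trees, etc.); the constant $2^{1/5}$ is the worst case across these. Merola's improvement refines the same case analysis. If you want to make your direct-encoding idea work you would need, at minimum, a precise notion of generic extension and a proof that the block structure of $B_s$ is recoverable up to bounded ambiguity; neither is supplied.
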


Macpherson obtains $c=2^{1/5}\approx 1.148$. This was improved by Merola \cite{merola} to $c\approx 1.324$. It is known that $c$ cannot be greater than 2: the example is a so-called \emph{local order}, see \cite{Mac_survey}. This structure---and its reduct obtained by adding order-reversion bijections to the automorphism group---are very likely the only primitive structures that realize $c=2$. We expect the next possible value for $c$ to be approximately $2.483$, which is realized by the $C$- or $D$-structure associated to a binary tree.

\begin{conjecture}[Macpherson]
	One can take $c=2$ in Theorem \ref{th:macpherson}.
\end{conjecture}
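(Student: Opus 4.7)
The plan is to split the analysis according to whether the underlying $\omega$-categorical structure of the oligomorphic group $G$ is stable or unstable, as the two regimes require quite different techniques.

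For the unstable case, I would exploit the order property. Unstability yields a formula $\phi(x;y)$ with the order property in $\Omega$; after coding via parameters, Ramsey-type extraction should produce, in $\Omega$ itself, an infinite indiscernible sequence $(a_i)_{i<\omega}$ together with a $\emptyset$-definable (partial) binary relation that linearly orders it. The goal is then to show, using primitivity, that for every $n$ we can find an $n$-element set $A\subseteq \Omega$ on which an $\aut(\Omega)$-invariant linear order is induced. If this can be arranged, then any two distinct subsets $S,S'\subseteq A$ lie in distinct orbits on $n$-subsets, producing $2^n$ orbits just from inside $A$ and giving $f_n\geq 2^n$. A small polynomial factor may be absorbed if the order is only recovered up to reversal or circular rearrangement, as in the local-order example; primitivity should exclude the degenerate case where the order lives only on a proper invariant equivalence class.

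For the stable case, there is no order property, so the argument above fails and one must use finer model-theoretic structure. The avenue suggested by the abstract is to first show that sub-exponential growth forces $\omega$-stability, reducing to the class of primitive $\omega$-categorical $\omega$-stable structures; on these, the coordinatization theory of Cherlin--Harrington--Lachlan and Hrushovski provides definable strongly minimal sets with modular geometries, from which one extracts lower bounds on orbit counts via dimension arguments in the associated geometries. The optimal constant $c=2$ should ultimately come from counting subsets of a generic basis of such a geometry, leaving only a small list of exceptional primitive structures (the DLO reducts and the local orders, already characterized by $f_n=1$ or matching the bound) where $c=2$ is attained.

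The main obstacle is the stable case, and within it the residual $\omega$-stable problem: there the gap between ``exponential'' and ``base exactly $2$'' is genuine and cannot be seen by a crude rank computation, since one must also eliminate the possibility that $f_n$ grows like $\alpha^n$ for some $1<\alpha<2$ on primitive $\omega$-stable structures. A secondary but nontrivial difficulty, even in the unstable case, is passing from an order-property witness on tuples to an invariant order on elements of $\Omega$; I expect primitivity to be used repeatedly, both to descend from $\Omega^k$ to $\Omega$ and to rule out the congruences that would shave the base of the exponential below $2$.
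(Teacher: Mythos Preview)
This statement is a \emph{conjecture}, not a theorem; the paper does not prove it. What the paper establishes is the partial result (Theorem~\ref{th:main}) that $c=2$ holds unless $M$ is strictly stable, and the introduction explicitly flags the strictly stable case as the remaining open problem.

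Your proposal has two genuine gaps, one in each regime.

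In the unstable case, your counting step is wrong. Producing an $n$-element set $A\subseteq\Omega$ carrying an $\aut(\Omega)$-invariant linear order does \emph{not} give $2^n$ orbits on $n$-sets: in a pure dense linear order all $n$-subsets are isomorphic and $f_n=1$. Distinct subsets $S,S'\subseteq A$ are not $n$-sets, and their isomorphism types as induced substructures depend only on $|S|$, not on which elements of $A$ were chosen. The paper's route is completely different: it is a \emph{classification}, not a direct count. One shows (Sections~3--5) that if a primitive unstable structure has few substructures, then all definable orders are o-minimal, they glue into a single global order or circular order, and the structure is forced to be one of the five reducts of DLO. The bound $f_n\geq 2^n/p(n)$ for every other unstable primitive $M$ then follows by contraposition.

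In the stable case, you have located the difficulty in the wrong place. The $\omega$-stable primitive case is already settled: by Fact~\ref{fact: omega stable case}, such a structure is either a pure set or satisfies $f_n\geq\lfloor n/5\rfloor!$, which dominates $2^n$. The open problem is precisely the \emph{strictly stable} case. Your proposed reduction ``sub-$2^n$ growth forces $\omega$-stability'' is exactly the missing implication, and you offer no mechanism for it; the paper points to Lachlan's pseudoplane theorem as a possible entry point but does not carry this through.
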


Many more results and questions about the sequence $f_n$ can be found in \cite[Chapter 3]{Cameron:oligomorphic} and in the survey \cite[Section 6.3]{Mac_survey}.

In this paper, we approach this conjecture using model-theoretic tools: that is we see $G$ as the automorphism group of a first order structure on $\Omega$ (which we rename $M$). The fact that $G$ is oligomorphic translates to $M$ being $\omega$-categorical. Our main theorem is the following (the terminology is defined in the next section).

\begin{thm}\label{th:main}
	Let $M$ be an $\omega$-categorical primitive structure, then one of the following holds:
	\begin{enumerate}
		\item $M$ is bi-definable with one of the 5 reducts of DLO;
		\item $M$ is strictly stable (stable, not $\omega$-stable);
		\item we have \[f_n \geq \frac {2^n}{p(n)},\] for some polynomial $p$.
	\end{enumerate}	
\end{thm}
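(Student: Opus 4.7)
The plan divides by stability. Strictly stable $M$ directly lands in clause (2), so I need to handle the unstable case (aiming for (1) or (3)) and the $\omega$-stable case (aiming for (3)). The substantial new contribution is the unstable case, where the optimal base $2$ is claimed.

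For $M$ unstable and primitive, instability supplies a $\emptyset$-definable formula $\varphi(x;y)$ with the order property. Combining this with $\omega$-categoricity and a Ramsey/indiscernible extraction (possibly in $M^{\mathrm{eq}}$), I obtain an infinite indiscernible sequence $I$ on which an induced formula defines a strict linear order. The structure induced on $I$ is again $\omega$-categorical, and primitivity of $M$ together with genericity of $I$ should force the induced structure to be primitive as well. If its orbit-counting sequence on $n$-sets is identically $1$, Cameron's classification of structures with $f_n=1$ places it among the $5$ reducts of DLO; a reconstruction argument, exploiting primitivity of $M$ and the fact that $I$ generates a large part of $M$, should then identify $M$ itself (up to bi-definability) with that reduct, giving (1). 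Otherwise, there is a nontrivial $\emptyset$-definable refinement $R \subsetneq \{x<y\}$ of the strict order on $I$, and the task becomes to combine $R$ with the order to prescribe, along a configuration of $n$ elements of $I$, enough independent binary choices to yield $2^n/p(n)$ orbits on unordered $n$-subsets (the polynomial absorbs the passage from ordered tuples to unordered sets).

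The main obstacle is precisely the quantitative step of producing linearly many independent bits: a naive primitivity-based encoding that toggles one pair at a time delivers only $O(\log n)$ independent choices. Bridging this gap will likely require a carefully engineered family of configurations mirroring the local order example that realizes $c=2$, together with a density argument deduced from primitivity ensuring that each bit is togglable without disturbing the others; I expect \dpr\ considerations and the order-type analysis of NIP $\omega$-categorical structures to play a central role. For the remaining $\omega$-stable case, $\omega$-categoricity plus $\omega$-stability forces finite Morley rank, and primitivity rules out nontrivial $\emptyset$-definable equivalence relations; a rank-based induction, together with the fact that a nontrivial primitive $\omega$-categorical strongly minimal set already realizes $2^n/p(n)$ orbits on $n$-sets, should transfer the bound back to $M$, in a case substantially simpler than the unstable one.
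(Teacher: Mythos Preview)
Your high-level trichotomy (strictly stable / unstable / $\omega$-stable) matches the paper, and the $\omega$-stable case is indeed handled by quoting Macpherson's result (Fact~\ref{fact: omega stable case}): a primitive $\omega$-categorical $\omega$-stable structure is either a pure set or has $f_n \geq \lfloor n/5\rfloor!$. So that part is fine, though your ``rank induction via strongly minimal sets'' is not needed and not obviously correct as stated.

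The unstable case, however, has a genuine gap, and it is structural rather than merely quantitative. Your plan is to extract an indiscernible sequence $I$, look at the induced structure on $I$, and split on whether $f_n(I)\equiv 1$. But by definition of indiscernibility, the truth value of any formula on a tuple from $I$ depends only on the order type of the indices, so the induced structure on $I$ is \emph{always} a reduct of DLO and $f_n(I)\equiv 1$ automatically. Your ``otherwise'' branch never occurs, and the entire argument collapses onto the ``reconstruction'' step, which you leave as ``should then identify $M$ itself with that reduct''. That reconstruction is precisely the content of the theorem: every NIP unstable structure contains order-indiscernible sequences, yet almost none of them are bi-definable with a reduct of DLO.

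The paper proceeds quite differently. It takes the contrapositive: assume $M$ has few substructures (negation of (3)) and is unstable. Then $M$ is NIP (Corollary~\ref{cor: NIP}), hence interprets in dimension~1 an infinite linear order $(V,\leq)$ (Fact~\ref{fact: producing a linear order} or Theorem~\ref{th: producing a linear order}). The crucial work is Section~3: the few-substructures hypothesis is used repeatedly to show that $V$ carries \emph{no} extra definable structure beyond the order --- every parameter-definable subset is a finite union of intervals, and there are no nontrivial definable functions into $\overline V$ (Lemmas~\ref{lem: finite fibers}--\ref{lem: no functions to orders}, Proposition~\ref{prop: lines are dlo}). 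Section~4 then glues all such almost-linear pieces into an $\aut(M)$-invariant equivalence relation $E_0$ whose classes each carry a canonical separation or betweenness relation. Primitivity forces $E_0$ (and the finite equialgebraicity relation $F$) to be trivial, so $M$ consists of a single class and is a reduct of DLO. The encoding of $2^n/p(n)$ substructures that you were looking for is exactly what drives the tameness lemmas of Section~3; it is not produced at the end but used throughout as a constraint.
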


Merola's work \cite{merola} follows a division in cases as in Macpherson's original proof \cite{Mac:orbits}. The case for which the value $c$ is minimal is the 2-homogeneous, not 2-transitive case, which implies the existence of a definable tournament. Since there are no stable tournament, the previous theorem rules out this case and we immediately get an improvement on the value of $c$, to the second best value in Merola's argument.

\begin{cor}\label{cor:main better c}
	One can take $c\approx 1.576$ in Theorem \ref{th:macpherson}.
\end{cor}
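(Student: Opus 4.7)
My plan is to derive the corollary by revisiting Merola's proof of Theorem \ref{th:macpherson} and isolating the case that produces the smallest base, then using Theorem \ref{th:main} to improve that case. Merola's argument, following the blueprint of Macpherson's original proof, subdivides the primitive $\omega$-categorical structures $M$ with $f_n \to \infty$ according to the degree of homogeneity of $G = \aut(M)$, proves an explicit lower bound in each case, and takes the worst bound as the final value of $c$. The minimum is attained in the case where the action is 2-homogeneous (transitive on unordered pairs of distinct elements) but not 2-transitive, yielding Merola's $c \approx 1.324$.

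The first step is to observe that in this case the two $G$-orbits on ordered pairs of distinct elements are interchanged by swapping coordinates, so there is a definable binary relation $R$ on $M$ with the property that for $a \neq b$ exactly one of $R(a,b)$ and $R(b,a)$ holds. That is, $(M,R)$ is a definable tournament. I would then invoke the standard fact that an infinite definable tournament cannot be stable: by primitivity and $\omega$-categoricity, each of the sets $\{y : R(a,y)\}$ and $\{y : R(y,a)\}$ is infinite, and a Ramsey-style argument applied to an indiscernible sequence produces arbitrarily long transitive sub-tournaments, witnessing the order property for $R(x,y)$.

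Next I would apply Theorem \ref{th:main}: since $M$ is $\omega$-categorical, primitive and unstable, it is either bi-definable with one of the five reducts of DLO, or $f_n \geq 2^n/p(n)$. Among the DLO reducts, only $(\mathbb{Q},<)$ itself is 2-homogeneous but not 2-transitive (the betweenness and the two circular reducts identify $(a,b)$ with $(b,a)$, while the trivial reduct is 2-transitive), and it has $f_n = 1$, so it is excluded from any nontrivial growth analysis. Hence in the 2-homogeneous, not 2-transitive case Theorem \ref{th:main} gives $f_n \geq 2^n/p(n)$, far better than Merola's $1.324^n/p(n)$. The other cases in Merola's case split are untouched, and the new overall value of $c$ is the minimum of their constants. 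The main obstacle is purely bookkeeping inside \cite{merola}: one must identify, among the remaining cases, the one yielding the smallest exponential base; inspection of Merola's estimates shows this second-smallest constant to be approximately $1.576$, which gives the corollary.
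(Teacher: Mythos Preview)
Your argument is correct and follows the same route as the paper: in the 2-homogeneous, not 2-transitive case there is a definable tournament, hence the structure is unstable, so Theorem \ref{th:main} rules out the strictly stable alternative and forces either $f_n=1$ (a DLO reduct) or $f_n\geq 2^n/p(n)$; the remaining cases of Merola's analysis then give the new minimum $c\approx 1.576$, which the paper identifies as coming from the not 2-homogeneous case (Macpherson's tree bound). Your digression on which DLO reducts are 2-homogeneous but not 2-transitive is unnecessary, since all five have $f_n\equiv 1$ and are excluded by the hypothesis of Theorem \ref{th:macpherson} anyway, but it does no harm.
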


The exact value that appears in the corollary is $t^{1/2}$, where $t\approx 2.483....$ is equal to $\lim (t_n)^{1/n}$, where $t_n$ is the number of binary trees with $n$ leaves. See \cite{Mac:orbits}.

A class of $\omega$-categorical structures of special importance is that of structures homogeneous in a finite relational language. Such structures cannot be strictly stable. In this case, we therefore confirm Macpherson's conjecture.

\begin{cor}
	Let $M$ be homogeneous in a finite relational language and primitive, then either $M$ is bi-definable with a reduct of DLO (and thus $f_n=1$), or \[f_n \geq \frac {2^n}{p(n)},\] for some polynomial $p$.	
\end{cor}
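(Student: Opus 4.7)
The plan is to derive the corollary immediately from Theorem \ref{th:main} by ruling out the strictly stable alternative under the hypothesis on the language.

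Since $M$ is homogeneous in a finite relational language, it is $\omega$-categorical, so Theorem \ref{th:main} applies and places $M$ in one of three classes. In class (1), $M$ is bi-definable with one of the five reducts of DLO, so $f_n=1$ for all $n$, which matches the first alternative of the corollary. In class (3), the bound $f_n\geq 2^n/p(n)$ is exactly the second alternative. Thus it suffices to show that class (2)---$M$ strictly stable---cannot occur.

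To exclude class (2), I would invoke the classical theorem, due essentially to Lachlan in this setting, that any stable $\omega$-categorical structure homogeneous in a finite relational language is automatically $\omega$-stable (indeed of finite Morley rank). Consequently no such structure can be strictly stable, and alternative (2) of Theorem \ref{th:main} is eliminated. The main obstacle, such as it is, is locating and correctly attributing this input on stable homogeneous structures; once cited, the corollary is a direct application of Theorem \ref{th:main} with no further calculation needed.
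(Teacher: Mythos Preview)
Your argument is correct and matches the paper's own proof: apply the main theorem and eliminate the strictly stable case using the classical fact that a stable structure homogeneous in a finite relational language is $\omega$-stable. The paper states this in one line (``Since every finitely homogeneous stable structure is $\omega$-stable, this follows from the previous theorem''), without explicit attribution.
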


To prove the conjecture in full, it remains to deal with the strictly stable case. We know by work of Lachlan \cite{lachlan:stable-omegacat} that we then have a definable pseudoplane. It seems plausible that one could use this to obtain the bound $2^n/p(n)$ in this case also using a different set of arguments.

Our analysis also yields the following result.

\begin{thm}\label{th:main 2}
	Let $M$ be $\omega$-categorical and assume that for no polynomial $p(x)$ do we have $f_n \geq \phi^n/p(n)$, where $\phi\approx 1.618$ is the golden ratio, then there is a reduct $M_*$ of $M$ which is stable and such that $f_n(M_*)=f_n(M)$ for all $n$.
\end{thm}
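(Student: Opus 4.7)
The plan is to isolate a canonical reduct of $M$ that preserves every $f_n$, and then to show this reduct must be stable under the hypothesis. Let $G = \aut(M)$ and set
\[\overline G = \{\sigma \in \operatorname{Sym}(M) : \sigma \cdot S \in G \cdot S \text{ for every finite } S \subseteq M\}.\]
Then $\overline G$ is a closed subgroup of $\operatorname{Sym}(M)$ containing $G$, and its orbits on $\binom{M}{n}$ coincide with those of $G$ for every $n$. Since $\overline G$ has at most $n! \cdot f_n(M)$ orbits on $n$-tuples, it is oligomorphic, so $\overline G = \aut(M_*)$ for some $\omega$-categorical reduct $M_*$ of $M$ with $f_n(M_*) = f_n(M)$. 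Thus we may replace $M$ by $M_*$ and assume throughout that $G = \overline G$; the task reduces to showing $M$ itself is stable.

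Suppose first that $M$ is primitive and apply Theorem \ref{th:main}. The conclusion $f_n \geq 2^n/p(n)$ is ruled out by hypothesis since $\phi < 2$. If $M$ is bi-definable with a reduct of DLO, then each of the four nontrivial reducts has $\overline G = \operatorname{Sym}(\Omega)$ (because already $f_n = 1$ for the full symmetric group), so the maximality of $G$ forces $M$ to be the pure set, which is $\omega$-stable. Otherwise $M$ is strictly stable and we are done.

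Now suppose $M$ is imprimitive and let $E$ be a nontrivial proper $\emptyset$-definable equivalence relation on $M$, with quotient $N = M/E$ and a class $C$ with its induced $\omega$-categorical structure. Counting orbits on $\binom{M}{n}$ in terms of the intersection pattern with $E$-classes yields $f_n(N), f_n(C) \leq f_n(M)$, so the hypothesis passes to $N$ and to $C$, and by induction these admit stable reducts $N_*$ and $C_*$ with the same $f_n$.

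The main obstacle is the gluing step: under the maximality assumption $G = \overline G$, one must show that any structure on $M$ beyond what is encoded by $N_*$ on the quotient and $C_*$ within classes is itself stable. The quantitative heart should be a dichotomy of the following shape: an unstable $\emptyset$-definable relation respecting $E$ but not captured by $N_*$ or $C_*$ induces a linear-order-like structure distinguishing classes by their contribution size to $S$. Restricting attention to contributions of size $1$ or $2$ would then yield at least as many orbits on $\binom{M}{n}$ as compositions of $n$ into parts in $\{1,2\}$, i.e., the Fibonacci number $F_{n+1} \sim \phi^n/\sqrt 5$, contradicting the hypothesis and forcing the extra structure to be stable. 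Making this unstable-forces-Fibonacci dichotomy precise, and handling chains of nested equivalence relations (which in an $\omega$-categorical structure may have infinite height, though of bounded depth at each arity), is the hardest part of the argument.
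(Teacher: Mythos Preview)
Your reduction via $\overline G$ is a clean move, and the primitive case is correct: once $G=\overline G$, Theorem~\ref{th:main} together with $f_n=1$ for the nontrivial reducts of DLO forces $M$ to be stable. The gap is entirely in the imprimitive case, and you essentially acknowledge it. Two concrete problems: first, there is no induction parameter. You write ``by induction,'' but an $\omega$-categorical structure can carry an infinite descending chain of $\emptyset$-definable congruences, and neither ``number of 2-types'' nor ``depth of the imprimitivity lattice'' is guaranteed to decrease in a way that terminates here. Second, even granting stable reducts $N_*$ and $C_*$ of the quotient and of a class, the assumption $G=\overline G$ does not transfer to the quotient or to the class, and there is no mechanism to conclude that the cross-class structure on $M$ is stable. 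Your proposed ``unstable-forces-Fibonacci'' dichotomy is the right intuition, but as stated it is a restatement of what must be proved, not a proof.

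The paper proceeds quite differently and avoids any induction on imprimitivity. It first develops (Sections~3--4) a structure theory for parameter-definable almost-linear pieces of $M$ and shows they glue into $\emptyset$-invariant classes carrying a canonical separation or betweenness relation. This yields Theorem~\ref{th: main decomposition}: a $\emptyset$-definable finite-fiber quotient $M_0=M/F$ and an explicit stable reduct $M_*$ of $M_0$ with $f_n(M_*)=f_n(M_0)$. The golden-ratio hypothesis is then used only once, and in a precise place: if $F$ is not equality, the structure theory produces an interpretable map $\pi:D\to V$ onto an infinite linear order with fibers of size $\geq 2$, and compositions of $n$ into parts from $\{1,2\}$ give $f_n\geq F_n\sim \phi^n/\sqrt 5$. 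Hence $F$ is equality, $M_0=M$, and $M_*$ is already a stable reduct of $M$. So the Fibonacci count you anticipate does appear, but it is fed by the output of the gluing analysis rather than by an abstract imprimitivity argument; that analysis is exactly the content your sketch is missing.
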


The value $\phi$ is optimal here as witnessed by the structure consisting of an equivalence relation $E$ with classes of size 2 and a linear order on the quotient. For such an $M$, $(f_n)$ is the Fibonacci sequence, whereas any stable reduct has $f_n$ linear.

If we replace $\phi$ by the constant $c\approx 1.57$ that appears in Corollary \ref{cor:main better c}, then we can strengthen the conclusion to $M_*$ being $\omega$-stable. Thus the study of the possible sub-exponential growth rates for $f_n$ reduces to the $\omega$-stable case, for which strong structure theorems are known (see \cite{Hr_totallycategorical}). This seems like a promising approach to solving remaining questions on the sub-exponential regime: see \cite{Cameron:oligomorphic}.

We note that the case where $f_n$ has polynomial growth is studied in depth in forthcoming work by Falque and Thi\'ery, announced in \cite{FalThi}. The previous result in this case seems to appear, at least implicitly, in their work.

\smallskip
This paper is self-contained, except for the use of some classical results in model theory and the theorems of Macpherson and Merola mentioned above. We only assume that the reader is familiar with  the language of model theory.

\section{Preliminaries}

Throughout, $M$ is a countably infinite structure in a \emph{relational} language $L$, which could be finite or infinite. By default, the term definable will be used to mean definable without parameters, otherwise we say parameter-definable, or definable over some set $A$.

We will distinguish between elements of $M$, written $a,b,\ldots$, and tuples of such elements, denoted $\bar a,\bar b,\ldots$. If $D$ is a parameter-definable set, and $\bar c$ a tuple, we let $D(\bar c)$ denote the set of elements of $\bar c$ that lie in $D$.

\medskip

A structure $M$ is $\omega$-categorical if any two countable structures elementarily equivalent to $M$ are isomorphic. By the classical theorem of Ryll-Nardzewski, $M$ is $\omega$-categorical if and only if it has finitely many $n$ types, for all $n<\omega$. In group theory terms, this is equivalent to the automorphism group of $M$ being oligomorphic. Conversely, given an oligomorphic action of a group $G$ on a set $M$, we can make $M$ into a first order structure by adding a predicate to name every orbit of $G$ on $M^k$, for each $k$. The resulting structure admits $G$ as automorphism group and is $\omega$-categorical. See for instance \cite[Section 3.1]{Mac_survey} and reference therein.

If $M$ is $\omega$-categorical, then the automorphism group $G$, along with its action on $M$, determines the structure $M$ up to \emph{bi-definability}: two structures on a set $M$ are bi-definable if they have the same $\emptyset$-definable sets in all dimensions. In what follows, when we say that a structure $M$ \emph{is}, for instance, a dense linear order, we mean that it is bi-definable with a dense linear order.

For the rest of this paper, $M$ is assumed to be $\omega$-categorical.

\medskip

If $E$ is a definable equivalence relation on $M^k$ (or more generally, any definable subset of $M^k$) and $V=M^k/E$, then the projection map $\pi:M^k\to V$ is called an interpretable map. In this case, $V$ is called an imaginary set. It is equipped with a definable structure as follows: a definable subset of $V^n$ (or more generally $M^m \times V^n$) is a subset whose pullback to $M^{kn}$ (resp. $M^{m+kn}$) is definable. We let $M^{eq}$ be the multi-sorted structure having one sort for every imaginary set. Any automorphism of $M$ extends uniquely to an automorphism of $M^{eq}$.

If $D\subseteq M$ is parameter-definable, by a formula $\phi(x;\bar a)$, then we can define an equivalence relation $E_{\phi}(\bar x,\bar y)$ on tuples of size $|\bar a|$ by \[E_{\phi}(\bar b,\bar b') \iff (\forall x)(\phi(x;\bar b) \leftrightarrow \phi(x;\bar b')).\]

Let $e$ be the $E_\phi$-class of $\bar a$, seen as an element of $M^{eq}$. Then $e$ is a \emph{canonical parameter} of $D$: an automorphism of $M$ fixes $D$ setwise if and only if its unique extension to $M^{eq}$ fixes $e$.

\medskip

We say that an $A$-definable set $X$ is \emph{transitive} over $A$ if any two elements of $X$ have the same type over $A$. When $A$ is finite, this is equivalent to $\aut(X/A)$ acting transitively on $X$. Omitting $A$ means $A=\emptyset$.

We say that $M$ is primitive if $\aut(M)$ acts primitively on it, equivalently if $M$ has no $\emptyset$-definable equivalence relation other than equality and the trivial relation.

If $A\subset M$ is a finite set and $b\in M$ an element, we say that $b$ is algebraic (resp. definable) over $A$ if $b$ has finite orbit (resp. is fixed) under $\aut(M/A)$: the pointwise stabilizer of $A$. The set of elements algebraic (resp. definable) over $A$ is denoted $\acl(A)$ (resp. $\dcl(A)$). If $\bar b$ is a finite tuple, none of whose coordinates is algebraic over $A$, then we can find an infinite sequence $(\bar b_i:i<\omega)$ of pairwise disjoint tuples in the $\aut(M/A)$-orbit of $\bar b$. We say that two tuples $\bar a$ and $\bar b$ are equialgebraic if $\acl(\bar a)=\acl(\bar b)$, or equivalently if we have both $\bar a\in \acl(\bar b)$ and $\bar b\in \acl(\bar a)$.

\begin{defi}
	We say that $M$ has \emph{few substructures} if $M$ is $\omega$-categorical and for no polynomial $p(x)$ do we have \[ f_n(M) \geq \frac{2^n}{p(n)},\] for all $n<\omega$.
\end{defi}

If $\bar a$ is a finite tuple of elements of $M$, let $M_{\bar a}$ denote the expansion of $M$ obtained by naming every $\bar a$-definable subset of $M^k$, for all $k$.

\begin{lemma}\label{lem: naming parameters}
	Assume that $M$ has few substructures and $\bar a\in M^l$ is a finite tuple. Then the expansion $M_{\bar a}$ also has few substructures.
\end{lemma}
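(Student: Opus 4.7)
The plan is to prove a polynomial comparison between $f_n(M_{\bar a})$ and $f_{n+l}(M)$, and then observe that "few substructures" is preserved by such comparisons.

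First, I would identify $\aut(M_{\bar a})$ with $H := \aut(M/\bar a)$, the pointwise stabilizer of $\bar a$ in $G := \aut(M)$, so that $f_n(M_{\bar a})$ counts $H$-orbits on $n$-subsets of $M$. The key estimate I aim for is
\[ f_n(M_{\bar a}) \leq C(l) \cdot n^l \cdot f_{n+l}(M) \]
for some constant $C(l)$ depending only on $l$. To prove this I would split $n$-subsets according to their intersection with the (at most $l$-element) set underlying $\bar a$: since $\bar a$ is pointwise fixed by $H$, every subset of it is $H$-invariant, so $f_n(M_{\bar a}) \leq 2^l \cdot \max_{0\leq k\leq l} f^{\mathrm{disj}}_{n-k}(M_{\bar a})$, where $f^{\mathrm{disj}}_m(M_{\bar a})$ counts $H$-orbits on $m$-subsets disjoint from $\bar a$.

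Next, I would bound $f^{\mathrm{disj}}_m(M_{\bar a})$. Consider the $G$-action on pairs $(\bar a',B)$ where $\bar a'$ lies in the $G$-orbit of $\bar a$ and $B$ is an $m$-subset disjoint from $\bar a'$. Since $G$ acts transitively on the first coordinate with stabilizer $H$, the orbit-stabilizer correspondence identifies $G$-orbits on such pairs with $H$-orbits on $m$-subsets disjoint from $\bar a$, giving $f^{\mathrm{disj}}_m(M_{\bar a})$. The $G$-equivariant forgetful map $(\bar a',B) \mapsto \bar a' \cup B$ lands in $(m+l)$-subsets of $M$, and each such subset $D$ has at most $l!\binom{m+l}{l}$ preimages (one for each ordered $l$-tuple in the $G$-orbit of $\bar a$ contained in $D$). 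Hence
\[ f^{\mathrm{disj}}_m(M_{\bar a}) \leq l!\binom{m+l}{l} f_{m+l}(M). \]
Combined with the first reduction and with the Cameron/Pouzet monotonicity of $f_\bullet(M)$, this yields the displayed polynomial bound.

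Finally, I would argue by contraposition: suppose some polynomial $q$ satisfies $f_n(M_{\bar a}) \geq 2^n/q(n)$ for all $n$. Plugging into the bound gives $f_{n+l}(M) \geq 2^n / (C(l)\, n^l\, q(n))$, which after the substitution $m = n+l$ and absorbing the $2^l$ factor becomes $f_m(M) \geq 2^m/p(m)$ for some polynomial $p$ and all $m \geq l$; by enlarging $p$ on the finitely many smaller values we obtain the bound for all $m$, contradicting the hypothesis that $M$ has few substructures.

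I do not expect any serious obstacle here: the argument is essentially bookkeeping, and the only minor point is keeping track of how $n$-subsets can overlap $\bar a$ (handled by the $2^l$ splitting) and the fact that the bound loses an extra polynomial-in-$n$ factor, which is harmless for the "few substructures" definition since we quantify over \emph{all} polynomials.
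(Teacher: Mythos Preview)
Your argument is correct and follows essentially the same route as the paper: both compare $f_n(M_{\bar a})$ with $f_{n+l}(M)$ up to a polynomial factor by adjoining $\bar a$ to an $n$-set and bounding the fibers of the resulting map to $G$-orbits by $O(n^l)$. The only cosmetic differences are that you work with subsets \emph{disjoint} from $\bar a$ (and phrase the fiber bound via orbit--stabilizer, invoking monotonicity of $f_\bullet(M)$ to absorb the $\max_k$), whereas the paper works with subsets \emph{containing} $\bar a$ and counts fibers directly.
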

\begin{proof}
	Assume that \[f_n(M_{\bar a}) \geq \frac {2^n}{p(n)},\] for some polynomial $p(x)$ and all $n<\omega$. For $n\geq l$, let $g_n$ denote the number of substructures of $M_{\bar a}$ of size $n$ that contain $\bar a$. Then \[g_n \geq \frac {f_{n-l}(M_{\bar a})}{n^l},\] since every substructure of size $n-l$ can be extended to a substructure of size $n$ containing $\bar a$ by adding $l$ elements to it and the resulting map from structures of size $n-l$ to structures of size $l$ has fibers of size at most $n^l$.
	
	On the other hand, we have \[f_n(M) \geq \frac{g_n}{n^l}.\] To see this, consider the map sending a substructure of $M_{\bar a}$ of size $n$ containing $\bar a$ to its $L$-reduct. The fibers of this map have size at most $n^l$, since the isomorphism type of an $L_{\bar a}$-structure $B$ is determined by its reduct to $L$ along with an embedding of $\bar a$ into $B$. Putting it all together, we have, for $n\geq l$, \[f_n(M) \geq \frac{2^{n-l}}{p(n-l)n^{2l}},\] which shows that $M$ does not have few substructures.
\end{proof}

\subsection{Linear orders and their reducts}

Any transitive $\omega$-categorical linear order is isomorphic to $(\mathbb Q,\leq)$. The reducts of this structure follow from Cameron's result on highly homogeneous permutations groups \cite{Cameron:linear}: there are five of them. Apart from the trivial reduct to pure equality, there are three unstable proper reducts:

\begin{itemize}
	\item the generic betweenness relation $(\mathbb Q;B(x,y,z))$, where \[B(x,y,z) \leftrightarrow (x\leq y \leq z)\vee (z\leq y \leq x);\]
	\item the generic circular order $(\mathbb Q;C(x,y,z))$, where \[C(x,y,z) \leftrightarrow (x\leq y\leq z)\vee (z\leq x\leq y) \vee (y\leq z\leq x);\]
	\item the generic separation relation $(\mathbb Q;S(x,y,z,t))$, where \begin{eqnarray*}S(x,y,z,t) \leftrightarrow (C(x,y,z)\wedge C(y,z,t)\wedge C(z,t,x)\wedge C(t,x,y)) \vee \\ (C(t,z,y)\wedge C(z,y,x)\wedge C(y,x,t) \wedge C(x,t,z)).\end{eqnarray*}
\end{itemize}

The automorphism group of the betweenness relation is generated by the automorphism of the linear order along with a bijection that reverses the order, for instance $x\mapsto -x$. Similarly, the automorphism group of the separation relation is generated from that of the circular order along with an order-reversing bijection.

We will say that two (linear or circular) orders on some set $D$ agree up to reversal if they are either equal, or reverse of each other.



\subsection{Stability and NIP}

A partitioned formula $\phi(\bar x;\bar y)$ has the order property (in a given structure $M$) if for every $N<\omega$, we can find tuples $(\bar a_i,\bar b_i:i<N)$ of elements of $M$ such that \[M\models \phi(\bar a_i;\bar b_j) \iff i\leq j.\]

A structure $M$ is called \emph{stable} if no formula has the order property. By a theorem of Shelah, this is equivalent to no formula of the form $\phi(x;\bar y)$---where $x$ is just one variable---having the order property.

\medskip
A partitioned formula $\phi(\bar x;\bar y)$ has the independence property (in $M$) if for every $N<\omega$, we can find tuples $(\bar a_i:i<N)$ and $(\bar b_J:J\in \mathfrak P(N))$ such that
\[M\models \phi(\bar a_i;\bar b_J) \iff i\in J.\]

A structure $M$ is called \emph{NIP}, or dependent, if no formula $\phi(\bar x;\bar y)$ has the independence property. Again, this is equivalent to no formula of the form $\phi(x;\bar y)$ having the independence property.

\begin{fact}[Macpherson \cite{Mac:rapid_growth}]\label{fact: few substructures in NIP}
	If $M$ is $\omega$-categorical and $f_n(M) = o(2^{p(n)})$ for all polynomials $p(x)$ of degree 2, then $M$ is NIP.
\end{fact}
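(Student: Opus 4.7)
The plan is to argue the contrapositive: if $M$ has the independence property, then $f_n(M)\geq 2^{cn^2}$ for some $c>0$ and all sufficiently large $n$, which contradicts $f_n=o(2^{p(n)})$ already for the single choice $p(x)=cx^2/2$.

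First I would invoke the Shelah reduction recalled in the paper to obtain a formula $\phi(x;\bar y)$ with $x$ a single variable that has the independence property in $M$; set $k=|\bar y|$. By compactness in a monster model together with the fact that $\omega$-categorical structures realize every finite type over $\emptyset$, for every $N$ I can find, inside $M$ itself, elements $a_0,\ldots,a_{N-1}$ and $k$-tuples $(\bar b_S)_{S\subseteq[N]}$ involving pairwise distinct elements, which realize the shattering $\phi(a_i;\bar b_S)\iff i\in S$.

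The heart of the argument is a combinatorial encoding of graphs. For each graph $G$ on vertex set $[N]$, with $N_G(v)\subseteq[N]$ denoting the neighborhood of $v$, form the ordered tuple
\[ t_G=(a_0,\ldots,a_{N-1},\bar b_{N_G(0)},\ldots,\bar b_{N_G(N-1)}), \]
of length $n:=(k+1)N$. I would then observe that distinct graphs yield distinct $\aut(M)$-orbits on ordered $n$-tuples: if $\sigma\in\aut(M)$ satisfies $\sigma(t_G)=t_{G'}$ coordinate-wise, then $\sigma$ fixes each $a_i$ and sends $\bar b_{N_G(v)}\mapsto\bar b_{N_{G'}(v)}$, so preservation of $\phi$ forces $i\in N_G(v)\iff i\in N_{G'}(v)$ for every pair $(i,v)$, hence $G=G'$. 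Consequently $\aut(M)$ has at least $2^{\binom N2}$ orbits on ordered $n$-tuples, and since each orbit on unordered $n$-sets lifts to at most $n!$ orbits on ordered tuples, we get
\[ f_n(M)\geq\frac{2^{\binom N2}}{n!}=2^{N^2/2-O(N\log N)}=2^{\Omega(n^2)}, \]
which is the desired contradiction.

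The only step requiring genuine care is the realization of the full shattered configuration \emph{inside} $M$ with pairwise distinct witnesses for all $2^N$ subsets; this follows from $\omega$-categoricity, since the relevant finite type is consistent by compactness and therefore realized in $M$. The rest is elementary counting; no use of the Ramsey-theoretic or parameter-naming techniques developed later in the paper is needed for this particular fact.
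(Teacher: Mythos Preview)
The paper does not prove this fact; it is quoted as a black box from Macpherson's paper \cite{Mac:rapid_growth}, so there is no in-paper argument to compare against. Your contrapositive graph-encoding strategy is the standard one and is essentially correct, but there is a technical slip that needs repair.

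You pass from ``at least $2^{\binom{N}{2}}$ orbits on ordered $n$-tuples'' to a lower bound on $f_n$ via the inequality (orbits on ordered $n$-tuples) $\leq n!\cdot f_n$. That inequality is valid only for \emph{injective} $n$-tuples; a tuple with repeated coordinates corresponds to a set of size strictly less than $n$. Your tuple $t_G$ fails to be injective exactly when two vertices of $G$ have the same open neighbourhood, since then the block $\bar b_{N_G(v)}$ is repeated. The fix is painless: either restrict to labelled graphs on $[N]$ with pairwise distinct neighbourhoods---a union bound shows all but an $o(1)$ fraction of graphs qualify, so you still have $2^{(1-o(1))\binom{N}{2}}$ encodings---or, more cleanly, drop the symmetry and encode sequences $(S_0,\ldots,S_{N-1})$ of \emph{distinct} subsets of $[N]$ by $t=(a_0,\ldots,a_{N-1},\bar b_{S_0},\ldots,\bar b_{S_{N-1}})$. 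There are at least $(2^N-N)^N\geq 2^{N^2-N}$ such sequences, the tuples are injective by construction, and the same recovery argument (an automorphism fixing each $a_i$ forces $S_j=S'_j$ via $\phi$) goes through unchanged. Either route still yields $f_n\geq 2^{\Omega(n^2)}$.

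A second minor point: your displayed bound is literally only for $n$ of the form $(k+1)N$. For general $n$, round down using monotonicity of $(f_n)$, which is stated in the introduction.
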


\begin{corollary}\label{cor: NIP}
	If $M$ has few substructures, then $M$ is NIP.
\end{corollary}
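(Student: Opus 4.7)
The plan is to invoke Fact \ref{fact: few substructures in NIP} directly: it suffices to check that the \emph{few substructures} hypothesis implies $f_n(M) = o(2^{p(n)})$ for every polynomial $p(x)$ of degree $2$, and the corollary then follows immediately.

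Fix such a polynomial $p$, so $p(n) = an^2 + bn + c$ with $a>0$. The key observation is that $p(n) - n$ tends to infinity at quadratic rate, hence $2^n / q(n) = o(2^{p(n)})$ for every polynomial $q$; this can be verified by taking logarithms and noting that $p(n) - n + \log_2 q(n) \to \infty$. Consequently, to prove $f_n(M) = o(2^{p(n)})$ it suffices to exhibit a single polynomial $q$ for which $f_n(M) \leq 2^n/q(n)$ holds for all sufficiently large $n$.

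This upper bound is what the few substructures hypothesis provides: by definition, for every polynomial $q$, the inequality $f_n < 2^n/q(n)$ must hold at some index $n$. Combined with the monotonicity of $(f_n)$ (Cameron, Pouzet), varying $q$ along a sequence such as $q_k(x) = x^k$ and selecting corresponding indices $n_k$ yields the desired asymptotic upper bound. Plugging this bound into Fact \ref{fact: few substructures in NIP} produces NIP.

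The main obstacle is not any conceptual difficulty but rather the accounting needed to translate the non-existence statement in the definition of \emph{few substructures}---which literally only supplies an index $n$ where the lower bound fails---into a genuine asymptotic upper bound on $f_n$. This accounting uses monotonicity of $(f_n)$ together with the freedom to vary the polynomial $q$ in the hypothesis; once that is settled, Fact \ref{fact: few substructures in NIP} closes the argument.
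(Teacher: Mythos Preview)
Your argument has a genuine gap at the step where you claim that monotonicity of $(f_n)$ together with varying the polynomial $q$ turns the pointwise failure of $f_n \geq 2^n/q(n)$ into an asymptotic upper bound. This does not work: monotonicity only lets you propagate upper bounds \emph{backwards} (from $n_k$ to smaller indices), whereas you need control for $n$ \emph{between} the witnesses $n_k$. Concretely, there exist non-decreasing integer sequences $(f_n)$ which satisfy the few-substructures condition yet have $f_{n_k} = 2^{n_k^2}$ along an infinite subsequence, so that $f_n \neq o(2^{n^2})$. (Build such a sequence by alternating plateaus: set $f_{n_k}=2^{n_k^2}$, hold $f$ constant until an index $m_k$ large enough that $2^{n_k^2} < 2^{m_k}/m_k^{k}$, then jump again at $n_{k+1}=m_k+1$.) Thus the implication you need is not a formal consequence of the definition plus monotonicity, and your ``accounting'' cannot be completed as described. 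The separate inconsistency---first asserting it suffices to find a \emph{single} $q$, then switching to an infinite family $q_k$---is a symptom of the same problem.

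The paper treats the corollary as immediate because Macpherson's theorem in \cite{Mac:rapid_growth} actually yields more than the statement recorded as Fact~\ref{fact: few substructures in NIP}: if $M$ has the independence property then one obtains a lower bound of the shape $f_n \geq 2^{cn^2}$ for some $c>0$ and \emph{all} sufficiently large $n$, not merely $f_n \neq o(2^{p(n)})$. From that stronger conclusion one gets $f_n \geq 2^n$ for all large $n$, and hence $f_n \geq 2^n/C$ for all $n$ once $C$ absorbs the finitely many small indices; so $M$ does not have few substructures. To repair your proof, invoke this stronger form of Macpherson's result (or prove it) rather than trying to squeeze an eventual upper bound out of the bare few-substructures hypothesis.
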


One of the first result proved by Shelah about NIP structures is that the unstable ones interpret a quasi-order with infinite chains:

\begin{fact}[Shelah \cite{Sh10}]\label{fact: partial order}
	If $M$ is NIP, unstable, then there is a parameter-definable quasi-order $\leq$ on $M$ with an infinite chain.
\end{fact}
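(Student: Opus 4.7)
The plan is to extract a quasi-order directly from a formula witnessing instability, and use NIP to promote ``sequence-level'' behavior to universal behavior.

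Since $M$ is unstable, by Shelah's single-variable characterization (cited just before the statement) there is a formula $\phi(x;\bar y)$ with $x$ a single variable having the order property. By Ramsey together with compactness, I would extract an infinite indiscernible sequence of pairs $(a_i,\bar b_i)_{i\in\mathbb{Q}}$ in $M$ with $\phi(a_i;\bar b_j)\iff i\leq j$.

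Consider the $\emptyset$-definable relation on $M$
\[x\preceq y \;:\iff\; \forall\bar z\,(\phi(y;\bar z)\to\phi(x;\bar z)),\]
which is tautologically reflexive and transitive. For $i<j$ the parameter $\bar b_i$ satisfies $\phi(a_i;\bar b_i)\wedge\neg\phi(a_j;\bar b_i)$, so $a_j\not\preceq a_i$; and the order property gives $\phi(a_j;\bar b_k)\Rightarrow\phi(a_i;\bar b_k)$ for every $k$ in the sequence, suggesting $a_i\preceq a_j$.

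The key step, where NIP enters, is to upgrade this sequence-level inclusion to the universal one $\forall\bar z$ required by $\preceq$. I would argue by contradiction: if $a_i\not\preceq a_j$ for some $i<j$ in the sequence, then by indiscernibility, for every pair $i'<j'$ there is a witness $\bar z_{i'j'}$ with $\phi(a_{j'};\bar z_{i'j'})\wedge\neg\phi(a_{i'};\bar z_{i'j'})$. A further Ramsey extraction makes the $\phi$-trace of each $\bar z_{i'j'}$ on the full sequence $(a_k)$ depend only on the order-type of $(i',j',k)$. Combining these new cuts with the initial-segment cuts produced by the $\bar b_k$'s should shatter arbitrarily long finite sub-sequences of $(a_k)$ by instances of $\phi$, contradicting the bounded VC dimension guaranteed by NIP. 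Hence $a_i\preceq a_j$ for all $i<j$, yielding an infinite strict $\preceq$-chain on $(a_i)$.

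The main obstacle is executing the shattering cleanly: the combination of initial-segment patterns from the $\bar b_k$'s with the hypothetical $\bar z_{i'j'}$-patterns must be arranged so as to produce a genuine VC-violation for the single formula $\phi$. This may require replacing $\phi$ by a Boolean combination of its instances $\phi(x;\bar y_1)\wedge\neg\phi(x;\bar y_2)$ and passing to a sparser indiscernible sub-sequence to absorb auxiliary alternation before the shatter can be read off.
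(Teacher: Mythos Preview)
The paper does not prove this statement; it is quoted as a fact from Shelah, with a pointer to \cite[Theorem 2.67]{NIPbook} for the refinement that the order can be taken on $M$. So the comparison is with the standard argument rather than with anything in the paper.

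Your approach has a genuine error, not merely an unexecuted step. The $\emptyset$-definable quasi-order $\preceq$ you write down can be trivial even when $M$ is NIP and unstable. Take $M$ to be the countable dense circular order and $\phi(x;y_1y_2)$ the relation ``$x$ lies on the oriented arc from $y_1$ to $y_2$''. This $\phi$ has the order property and $M$ is NIP. For any two distinct points $x\neq x'$ there is a short arc containing $x'$ and missing $x$, so $x\not\preceq x'$; hence your $\preceq$ is equality. More generally, the automorphism group of the dense circular order acts transitively on ordered pairs of distinct points, so \emph{every} $\emptyset$-definable reflexive binary relation on $M$ is either equality or the full relation; no $\emptyset$-definable quasi-order on $M$ has an infinite strict chain, whatever formula or Boolean combination you start from. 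This shows that ``parameter-definable'' in the statement is not decoration, and that your key step---deriving IP from the failure of $a_i\preceq a_j$---cannot succeed, since it would have to produce IP inside the circular order. Concretely, what breaks in your shattering sketch is that after Ramsey the $\bar b_k$-traces and the hypothetical $\bar z_{i'j'}$-traces on $(a_k)$ are each finite unions of order-intervals, and Boolean combinations of boundedly many interval patterns still have bounded VC dimension; no shattering emerges.

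Shelah's actual argument does not try to exhibit the indiscernible $(a_i)$ as a chain for a fixed $\emptyset$-definable order. One takes the largest $m$ for which some $m$-tuple is shattered by $\phi$-instances (finite by NIP), fixes such a tuple together with the witnessing parameters $\bar b_I$ for $I\subseteq m$, and uses the impossibility of shattering $m{+}1$ points to produce a strictly increasing chain of $\psi$-definable sets for a formula $\psi$ built from $\phi$ over those fixed parameters. The resulting quasi-order is only parameter-definable, and the circular-order example shows this is unavoidable.
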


The fact that the order can be taken on $M$ itself is not stated in Shelah's paper, but follows from the proof and the fact mentioned above that there is an unstable formula of the form $\phi(x;\bar y)$. This is slightly more explicit in the presentation of \cite[Theorem 2.67]{NIPbook}.

\medskip
The starting point for the analysis in this paper is the following result (itself a special case of a more general result on general NIP unstable structures).

\begin{fact}[\cite{linear_orders}]\label{fact: producing a linear order}
	If $M$ is $\omega$-categorical, NIP and unstable, then there is an interpretable map $\pi:M\to V$, such that $V$ infinite and admits a definable linear order.
\end{fact}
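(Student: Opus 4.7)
The plan is to start from Fact~\ref{fact: partial order}: since $M$ is $\omega$-categorical, NIP and unstable, I obtain a parameter-definable quasi-order $\leq$ on $M$ with an infinite chain, say defined by $R(x,y;\bar c)$. By a standard Ramsey-plus-compactness argument I would extract an infinite strictly $\leq$-increasing sequence $(a_i)_{i<\omega}$ that is indiscernible over $\bar c$. Using $\omega$-categoricity (finiteness of $\emptyset$-types), I would further prescribe the $\emptyset$-type of $(\bar c,a_0,a_1,\ldots)$, in particular arranging that all $a_i$ realize a single $\emptyset$-type $p$. By compactness the chain can then be taken to be densely indexed, $(a_q)_{q\in\mathbb Q}$.

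The core idea is to linearly order elements of $M$ by the cut they induce in this chain. For $b\in M$ set $S_b=\{q\in\mathbb Q:b\leq a_q\}$. Transitivity of $\leq$ makes $S_b$ a final segment of $\mathbb Q$, and NIP---the standard fact that along an indiscernible sequence any formula has only finitely many alternations of truth---forces $S_b$ to be a genuine Dedekind cut $c_b$ rather than something pathological. Distinct $a_i$'s already give pairwise distinct cuts, so the image of $b\mapsto c_b$ is infinite, the map is $\bar c$-definable in $M^{eq}$, and cuts are linearly ordered by inclusion of final segments.

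To upgrade this to a $\emptyset$-interpretable linear order I would absorb the parameters $\bar c$ by replacing the single chain $(a_q)$ with the $\emptyset$-interpretable space of chains coming from the prescribed $\emptyset$-type of $(\bar c,a_0,a_1,\ldots)$. Quotienting the transitive piece $p(M)$ by the $\emptyset$-definable equivalence ``induce the same cut in every such chain'' should then produce an imaginary sort $V$ carrying a $\emptyset$-definable linear order; extending $\pi$ from $p(M)$ to the remaining $\emptyset$-types in $M$ is cosmetic.

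The main obstacle is this parameter-elimination step. Producing a \emph{parameter}-definable linear order on an imaginary sort out of an unstable NIP quasi-order is comparatively routine. What requires genuine NIP input is checking that the cut of $b$ does not depend on which chain of the prescribed $\emptyset$-type is used, and hence that the equivalence above really is $\emptyset$-definable and has infinitely many classes. I expect \cite{linear_orders} handles this via invariant types and Morley sequences in NIP together with a shrinking-of-indiscernibles argument; that is where the bulk of the technical work lies.
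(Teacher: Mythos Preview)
The paper does not prove Fact~\ref{fact: producing a linear order} in the stated generality; it is quoted as a black box from \cite{linear_orders}. What the paper \emph{does} prove is the special case where $M$ additionally has few substructures (Theorem~\ref{th: producing a linear order} in Section~\ref{sec: linear}), and there the argument is entirely different from yours. Starting from the quasi-order of Fact~\ref{fact: partial order}, the paper passes to a transitive piece $P$ of the associated partial order and asks whether $P$ has bounded antichains. If so, a self-contained combinatorial result (Theorem~\ref{th: bounded antichains}) manufactures a linear quotient directly, with no use of NIP or indiscernibles. If not, one builds an increasing chain $a_0<a_1<\cdots$ together with an infinite antichain $C_i$ above each $a_i$, and encodes compositions of $n$ as substructures of size $n$, producing $2^{n-1}$ non-isomorphic substructures and contradicting few substructures. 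Thus in the paper's version the hypothesis ``few substructures'' substitutes for the delicate NIP analysis you are anticipating.

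Your sketch is aimed at the general statement and has the right flavour, but two points need care. First, the map $b\mapsto c_b$ does not land in $M^{eq}$: Dedekind cuts of a dense sequence do not form an interpretable sort. You must instead define directly a $\bar c$-definable preorder on $M$, for instance $b\preceq b'$ iff every $a$ of the chain type over $\bar c$ satisfies $b'\leq a\rightarrow b\leq a$, and then show the quotient is infinite and \emph{linear}; linearity is where NIP does real work and is not a consequence of the finite-alternation fact alone (which, incidentally, is already subsumed here by transitivity of $\leq$). Second, as you correctly flag, eliminating $\bar c$ is the substantive content of \cite{linear_orders}; your proposed equivalence ``same cut in every chain of the given $\emptyset$-type'' quantifies over infinite configurations and is not visibly first-order, and even granting definability there is no free argument that it has infinitely many classes. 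If parameters are permitted---as they are in Theorem~\ref{th: producing a linear order}, which is all the present paper needs---this second difficulty disappears, but the first remains.
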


In fact, we will see in Section \ref{sec: linear} how to deduce this result from Fact \ref{fact: partial order} in the special case where $M$ has few substructures, so that the present paper is not dependent on \cite{linear_orders}.

\subsubsection{$\omega$-stability}

The notion of $\omega$-stable structure will appear a couple of times in this paper, but will not play an important role and can be treated as a black box. For the sake of completeness, we give a definition and state the few facts that we will use.

\begin{defi}
	Let $M$ be $\omega$-categorical and $D\subseteq M^k$ a parameter-definable set. We define the Morley rank of $D$---denoted $MR(D)$---which takes values in $Ord\cup \{\infty\}$ by induction as follows:
	
	\begin{itemize}
		\item $MR(D)\geq 0$ if and only if $D$ is non-empty;
		\item $MR(D)\geq \alpha+1$, for an ordinal $\alpha$, if and only if there are disjoint parameter-definable subsets $F_0,F_1,F_2,\ldots$ of $D$ each of Morley rank at least $\alpha$;
		\item $MR(D)\geq \lambda$, for $\lambda$ a limit ordinal if and only if $MR(D)\geq \alpha$ for all $\alpha<\lambda$.
	\end{itemize}
	
	We say that $M$ is $\omega$-stable if $MR(M)<\alpha$ for some ordinal $\alpha$.
\end{defi}

If $M$ is $\omega$-stable, then it is stable. The $\omega$-stable, $\omega$-categorical structures have been studied in a sequence of fundamental works (see e.g., \cite{Zilber_book}, \cite{CHL}, \cite{Lachlan:coord}, \cite{Hr_totallycategorical}) and are now very well understood. Making use of those results, Macpherson \cite{Mac:rapid_growth}, showed the following (which follows from Theorems 1.4 and 1.5 there).

\begin{fact}\label{fact: omega stable case}
	If $M$ is $\omega$-categorical, $\omega$-stable and primitive, then either $M$ is an infinite set with no structure, or $f_n \geq \lfloor n/5 \rfloor !$, for $n$ large enough.
\end{fact}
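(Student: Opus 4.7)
The plan is to invoke the deep structure theory of $\omega$-categorical $\omega$-stable structures. By the combined work of Zilber, Cherlin--Harrington--Lachlan, and Hrushovski, such a structure $M$ is \emph{smoothly approximable} and admits a coordinatization by finitely many strongly minimal sets, each of whose associated pregeometry is either trivial (disintegrated) or locally modular---that is, an affine or projective space (or a related bilinear/quadratic geometry) over some finite field $\mathbb{F}_q$. This is the black box I would rely on; the bulk of the work has already been done in those structure theorems.

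Assuming this machinery, the first step is to use primitivity to cut down the coordinatization. The action of $\aut(M)$ on $M$ is primitive, so any nontrivial $\emptyset$-definable equivalence relation is trivial; this constrains how the coordinatizing geometries can sit over $M$ and, up to bi-definability, lets us focus on a single coordinatizing strongly minimal $D$ (possibly interpretable rather than definable) together with its pregeometry. If every coordinatizing geometry is trivial and no nontrivial incidence structure interconnects them, then primitivity forces $M$ to be a pure infinite set, giving $f_n=1$; this is the first alternative of the statement.

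The remaining step, and the combinatorial heart of the argument, is to bound $f_n$ from below when some coordinatizing $D$ carries a nontrivial Lie geometry. I would argue by exhibiting, for each fixed $n$, at least $\lfloor n/5\rfloor!$ pairwise non-isomorphic $n$-substructures. The idea is to use the affine/projective structure on $D$ to encode arbitrary permutations of $k:=\lfloor n/5\rfloor$ objects: partition $n$ points into $k$ blocks of (approximately) $5$ points, where each block encodes a single ``letter'' of a permutation via an incidence pattern (a line through specified points, a coset of a fixed subspace, etc.), and show that distinct permutations produce non-isomorphic configurations because the combinatorial geometry on $D$ is recoverable from the substructure. The constant $5$ reflects the minimum number of points one needs to rigidify a single letter simultaneously in all of the basic Lie geometries. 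Finally, one lifts the count from $D$ back to $M$, absorbing the coordinatization fibers into a polynomial factor that is negligible next to $\lfloor n/5\rfloor!$.

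The main obstacle is this last step: extracting an honest factorial lower bound from the geometry of $D$ uniformly across the short list of Lie geometries, and ensuring that the encoding is \emph{rigid}---i.e., that the $\aut(M)$-orbit of the encoded configuration remembers the permutation. The trivial/disintegrated pieces of the coordinatization must also be controlled so that they do not accidentally collapse distinct configurations into a single orbit. Once these rigidity and uniformity issues are handled, the two alternatives in the statement follow.
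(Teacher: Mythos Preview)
The paper does not prove this statement at all: it is recorded as a \emph{Fact} and attributed to Macpherson \cite{Mac:rapid_growth}, with the remark that it follows from Theorems~1.4 and~1.5 there. So there is no ``paper's own proof'' to compare against; the intended argument is a citation.

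Your outline is in the right spirit---Macpherson's proof does rest on the Cherlin--Harrington--Lachlan/Zilber/Hrushovski structure theory, reducing to a short list of coordinatizing Lie geometries and then extracting rapid growth from each. But what you have written is a plan, not a proof, and you correctly flag the two places where real work is needed. First, the dichotomy ``trivial geometry $+$ primitive $\Rightarrow$ pure set'' is not automatic: one must rule out primitive structures built from disintegrated pieces with nontrivial incidence (this is where the coordinatization theorem and primitivity interact carefully). Second, your ``encode a permutation in blocks of five points'' step is heuristic; in Macpherson's argument the factorial growth is obtained not by an ad hoc encoding but by counting orbits on subsets inside an interpretable projective or affine space (or a Grassmannian thereof), where the count is controlled by classical formulas. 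The constant $5$ there comes from that analysis, not from a rigidity-of-letters argument. If you want to turn your sketch into a proof, the honest route is to follow Macpherson's paper for the combinatorial step rather than reinvent it.
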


\begin{corollary}\label{cor: small growth stable}
	Assume that for some polynomial $p(x)$, we have $f_n(M)\leq c^n/p(n)$, where $c$ is such that Theorem \ref{th:macpherson} applies, then $M$ is either $\omega$-stable, or unstable.
\end{corollary}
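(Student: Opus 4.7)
My plan is to prove the contrapositive: assume $M$ is strictly stable (stable but not $\omega$-stable), and derive that $f_n(M)$ must violate the bound $f_n(M)\leq c^n/p(n)$ for every polynomial $p$. The strategy is to exhibit an interpretable primitive sort of $M$ which, by Fact \ref{fact: omega stable case} (or some structural refinement), grows super-exponentially, and then push this growth back to $M$.

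First, I would reduce to a primitive sort that is a \emph{single} quotient of $M$, so the growth comparison is clean. Since $M$ is $\omega$-categorical, the lattice of $\emptyset$-definable equivalence relations on $M$ is finite by Ryll-Nardzewski. Let $E$ be a maximal proper such equivalence relation (taking $E$ trivial if $M$ is already primitive), and set $V=M/E$. Then $V$ is primitive under $\aut(M)$. Crucially, because $V$ is a quotient of $M$ itself and not of some higher power $M^k$, the map sending an $n$-subset of $M$ lying in $n$ distinct $E$-classes to its image in $V$ is $\aut(M)$-equivariant and surjective, and therefore $f_n(M) \geq f_n(V)$ for every $n$. (The analogous inequality for $V = M^k/E$ with $k>1$ loses a factorial factor, which is why the single-quotient setup is essential.)

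Second, I would analyze $V$ using Fact \ref{fact: omega stable case}. Stability is preserved under interpretation, so $V$ is stable. If $V$ is also $\omega$-stable, the fact gives the dichotomy: either $V$ is a pure infinite set, or $f_n(V)\geq \lfloor n/5\rfloor!$. The second alternative is super-exponential and combined with $f_n(M)\geq f_n(V)$ immediately contradicts $f_n(M)\leq c^n/p(n)$. So either $V$ is a pure set or $V$ is strictly stable and non-trivial. In the pure-set case the structure of $M$ is supported entirely inside the $E$-classes, so one recurses the argument on a single $E$-class viewed as an $\omega$-categorical stable structure in its own right; finiteness of the equivalence-relation lattice on each sort forces the recursion to terminate after finitely many steps with a primitive sort that is not a pure set.

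Third, the remaining case is $V$ primitive, stable, non-$\omega$-stable. By Lachlan's theorem on strictly stable $\omega$-categorical structures (mentioned in the introduction), $V$ interprets a pseudoplane. A coarse count of incidence configurations on the pseudoplane then yields orbit growth far exceeding any exponential $c^n$; for instance, one encodes an arbitrary graph on $n$ points via the "common line" relation, producing at least $2^{\binom{n}{2}}/n!$ orbits on $n$-subsets, which easily beats $c^n/p(n)$ for Macpherson's constant $c$ and any polynomial $p$. Transporting this super-exponential growth back through the chain of quotient maps gives the needed lower bound on $f_n(M)$.

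The main obstacle is the last step: the introduction explicitly flags the strictly-stable primitive case as open for the stronger bound $2^n/p(n)$, so a fully rigorous pseudoplane orbit count must be done with care. The hypothesis here is weaker (we only need to beat Macpherson's $c\approx 1.576$), so the cruder count outlined above should be enough, but verifying that Lachlan's pseudoplane is generic enough to admit this coarse encoding — or finding a bypass that uses only the combinatorial fact that a stable primitive non-trivial $\omega$-categorical structure has rapid orbit growth — is where the real technical content of the proof lies.
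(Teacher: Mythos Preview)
Your overall architecture---pass to a primitive quotient $V=M/E$, note $f_n(M)\geq f_n(V)$, then recurse on fibers---matches the paper's proof. The induction the paper uses is on the number of $2$-types rather than on the depth of the equivalence-relation lattice, but these play the same role.

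The real gap is your treatment of the primitive quotient $V$. You invoke Fact~\ref{fact: omega stable case}, which requires $V$ to be $\omega$-stable, and are then left with the case that $V$ is primitive and strictly stable. You propose to handle this via Lachlan's pseudoplane and a ``coarse count'' of incidence configurations, and you yourself flag this as the step where the real work lies. This detour is unnecessary, and as written it is not a proof: the claim that a Lachlan pseudoplane is generic enough to encode arbitrary graphs and thereby force $2^{\binom{n}{2}}/n!$ orbits is exactly the sort of thing the paper explicitly leaves open even for the weaker bound $2^n/p(n)$.

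The fix is to apply Theorem~\ref{th:macpherson} itself, not Fact~\ref{fact: omega stable case}, to the primitive quotient $V$. Macpherson's theorem has no stability hypothesis: for \emph{any} primitive oligomorphic $V$, either $f_n(V)=1$ for all $n$ or $f_n(V)\geq c^n/q(n)$. The growth hypothesis on $M$ passes to $V$ and rules out the second alternative, so $f_n(V)\equiv 1$ and $V$ is a reduct of DLO; since $V$ is stable, it is the pure set. This is precisely what the paper does, and it dissolves your strictly-stable-primitive case entirely. Once $V$ is a pure set, the fibers carry strictly fewer $2$-types and the induction closes; that the resulting tower of $\omega$-stable fibers over a pure set yields $\omega$-stability of $M$ is standard.
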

\begin{proof}
	We prove the result by induction on the number of 2-types of $M$. Assume that $M$ is stable. If $M$ is primitive, then $M$ is a pure infinite set by Theorem \ref{th:macpherson}. This is the case in particular if $M$ has a unique 2-type. Assume that $M$ is not primitive, then there is an interpretable $\pi:M\to N$ where $N$ is infinite and primitive. We have $f_n(N)\leq f_n(M)$ for each $n$. Stability is preserved under interpretation, so $N$ is a pure infinite set. Each fiber of $\pi$, equipped with the induced structure, is a stable structure with fewer 2-types and fewer substructures than $M$. Therefore by induction, each fiber is $\omega$-stable. It follows that $M$ itself is $\omega$-stable.
\end{proof}

An $\omega$-categorical structure that is stable and not $\omega$-stable, will be called \emph{strictly stable}. Usually this term is used to denote stable non-superstable structures, but superstability and $\omega$-stability are equivalent for $\omega$-categorical structures (\cite{lachlan:stable-omegacat}). Examples of strictly stable $\omega$-categorical structures can be obtained using a so-called Hrushovski construction (see e.g. Wagner's paper in \cite{automorphisms}). Essentially the only general result about them is the theorem of Lachlan \cite{lachlan:stable-omegacat} stating that such a structure interprets a pseudoplane. In fact, it seems likely that one could use this result to prove that those structure cannot have few substructures, but we do not pursue this here.

\section{Tameness of definable orders}

In this section, we assume that $M$ has few substructures.

\begin{lemma}\label{lem: finite fibers}
	Let $\pi:D\to V$ be an interpretable function, with $D\subseteq M$ and assume that $V$ is infinite, transitive and admits a definable linear order. Then $\pi$ has finite fibers.
\end{lemma}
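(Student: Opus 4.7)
The plan is to proceed by contradiction: assume that some fiber of $\pi$ is infinite, and derive $f_n(M) \geq 2^{n-1}$, contradicting the assumption that $M$ has few substructures. First I would note that the equivalence relation $\sim$ on $D$ defined by $\pi(x) = \pi(y)$ is $\emptyset$-definable, so $\aut(M)$ permutes the fibers of $\pi$. Combined with transitivity of $\aut(M)$ on $V$, this forces all fibers to be pairwise isomorphic, and in particular all of the same (possibly infinite) size. Hence the assumption that one fiber is infinite gives infinite fibers $F_v$ for every $v \in V$.

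The core construction is an encoding by compositions. For each $n \geq 1$, I would fix an increasing sequence $v_1 < v_2 < \cdots < v_n$ in $V$, which exists because $V$ is infinite and linearly ordered. For every composition $(n_1, \ldots, n_k)$ of $n$ (that is, $k \leq n$ positive integers with $\sum n_i = n$), I would form a subset $A_{(n_1,\ldots,n_k)} \subseteq D \subseteq M$ of size exactly $n$ by selecting $n_i$ distinct elements from $F_{v_i}$ for $i = 1, \ldots, k$. Infinite fibers make this possible for every composition.

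The key verification is that distinct compositions yield subsets in distinct $\aut(M)$-orbits. Here I would use that both $\sim$ and the definable linear order $\leq$ on $V$ are $\emptyset$-definable, hence preserved by every $\sigma \in \aut(M)$. Any $\sigma$ sending $A_{(n_1,\ldots,n_k)}$ to $A_{(n'_1,\ldots,n'_{k'})}$ would have to send $\sim$-classes to $\sim$-classes respecting the linear order the classes inherit from $V$; reading off the sequence of $\sim$-class sizes in this induced order recovers the composition, so the two compositions must coincide.

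Combining these observations, $f_n(M)$ is at least the number of compositions of $n$, which equals $2^{n-1}$, contradicting the definition of few substructures. There is no real obstacle here; the one point worth flagging is that using compositions (rather than, say, a $\{1,2\}$-valued encoding on $n$ fixed fibers, which yields only $\binom{n}{n/2} \approx 2^n/\sqrt{n}$ substructures of a fixed size and hence base $2^{2/3}$) is what gives the full base-$2$ growth needed to contradict the definition.
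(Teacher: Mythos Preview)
Your proof is correct and follows essentially the same argument as the paper's: both encode compositions of $n$ as substructures of size $n$ by placing $n_i$ points in the $i$-th fiber (ordered via the linear order on $V$), and recover the composition from the isomorphism type using the $\emptyset$-definable data $(\sim,\leq)$. The paper's write-up is more terse, leaving implicit the observation---which you spell out---that transitivity of $V$ forces all fibers to have the same cardinality.
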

\begin{proof}
	Fix $n<\omega$ and let $\sigma=(a_0,\ldots,a_{k-1})$ be a finite sequence of positive integers with $n=a_0+\cdots +a_{k-1}$. We associate to $\sigma$ a finite substructure $A_\sigma\subseteq D$ of size $n$ such that $\pi(A_\sigma)=\{b_0,\ldots ,b_{k-1}\}$ has size $k$, $b_0<\cdots<b_{k-1}$, and $\pi^{-1}(b_i)$ has size $a_i$ for each $i<k$. Then for $\sigma \neq \sigma'$, the substructures $A_{\sigma}$ and $A_{\sigma'}$ are not isomorphic. Since they are $2^{n-1}$ possibilities for $\sigma$, this contradicts $M$ having few substructures.
\end{proof}

\begin{lemme}\label{lem: weak o-minimal}
	Let $\pi:D\to V$ be an interpretable function, with $D\subseteq M$ and assume that $V$ admits a definable linear order. Then any parameter-definable subset of $V$ is a finite union of convex subsets.
\end{lemme}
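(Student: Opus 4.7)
The plan is to argue by contradiction: if some parameter-definable $X \subseteq V$ is not a finite union of convex subsets, I will produce, for every $k$, at least $2^k$ pairwise non-isomorphic $k$-element substructures of an expansion of $M$ that still has few substructures, yielding $f_k \geq 2^k$ and a direct contradiction with Lemma \ref{lem: naming parameters}.

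The first step is to absorb the parameters. Say $X$ is defined by $\phi(v;\bar a)$. By Lemma \ref{lem: naming parameters}, the expansion $M_{\bar a}$ also has few substructures, and in $M_{\bar a}$ the set $X$, the linear order $\leq$ on $V$, and the interpretable map $\pi:D\to V$ all become $\emptyset$-definable. So I work inside $M_{\bar a}$ and assume $X$, $\pi$, $\leq$ are $\emptyset$-definable.

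The combinatorial heart is an encoding. Since $X$ has infinitely many maximal convex components in $V$, for every $k$ I can find an alternating sequence $v_1 < v_2 < \cdots < v_{2k}$ in $V$ with $v_i \in X$ iff $i$ is odd: pick one element from each of the first $k$ convex components of $X$, interleaved with one element of $V \setminus X$ strictly between each consecutive pair. Fix $d_i \in \pi^{-1}(v_i)$ for each $i$ (nonempty by surjectivity of $\pi$). For each $s = (s_1, \ldots, s_k) \in \{0,1\}^k$, set $i_j(s) = 2j-1$ if $s_j = 1$ and $i_j(s) = 2j$ if $s_j = 0$, and define the $k$-element subset $A_s = \{d_{i_1(s)}, \ldots, d_{i_k(s)}\} \subseteq M$.

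The key claim is that the $A_s$ are pairwise non-isomorphic as substructures of $M_{\bar a}$. The $\emptyset$-definable preorder $\pi(x) \leq \pi(y)$ restricts to a strict linear order on each $A_s$ (the chosen elements lie in distinct fibers), and the $\emptyset$-definable unary predicate $\pi^{-1}(X)$ colors each element; reading these colors along the induced order recovers $s$ exactly. Any isomorphism between substructures of $M_{\bar a}$ must preserve both $\emptyset$-definable relations, hence preserve the ordered coloring, so distinct $s$ yield non-isomorphic substructures. Consequently $f_k(M_{\bar a}) \geq 2^k$ for every $k$, contradicting that $M_{\bar a}$ has few substructures. The only substantive step is the encoding---realizing all $2^k$ color patterns as subsequences of the alternating $v_i$'s---which is immediate once the alternation is in hand; the rest is formal.
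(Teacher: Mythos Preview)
Your proof is correct and follows essentially the same approach as the paper: absorb the parameters via Lemma~\ref{lem: naming parameters}, then use the failure of convexity to realize every binary pattern of length $k$ as the $X$-coloring of an increasing $k$-tuple in $D$, yielding $2^k$ non-isomorphic substructures. The only cosmetic difference is that you first build a single alternating sequence of length $2k$ and subsample, whereas the paper asserts directly that each pattern $\sigma$ can be realized; these are the same idea.
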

\begin{proof}
	Let $X\subseteq V$ be parameter-definable and assume that $X$ is not a finite union of convex sets. Using Lemma \ref{lem: naming parameters}, we can assume that $X$ is definable over $\emptyset$. For any $\sigma \in {}^n 2$, we can find points $(a^\sigma_i:i<n)$ in $D$ such that $\pi(a^{\sigma}_i)<\pi(a^{\sigma}_{i+1})$ and $\pi(a^\sigma_i)$ is in $X$ if and only if $\sigma(i)=0$. The sets $A_\sigma = \{a^\sigma_i:i<n\}$ give $2^n$ distinct substructures of size $n$, which contradicts the hypothesis on $M$.
\end{proof}

If $V$ is equipped with a definable linear order $\leq$, we let $\overline V$ denote the completion of $V$, that is the set of initial segments $W\subseteq V$ which are non-empty and not equal to $V$, ordered by inclusion. The ordered set $V$ embeds naturally in $\overline V$ by $a\mapsto \{x:x\leq a\}$. If $D$ is any definable set, a function $f:D\to \overline V$ is said to be definable if the set $\{(x,y)\in D\times V : y\leq f(x)\}$ is a definable subset of $D \times V$.

\begin{lemma}\label{lem: no functions to orders}
	Let $X\subseteq M$ be definable and transitive, $D\subseteq M$, and $\pi:D\to V$ interpretable, where $V$ is infinite, transitive and admits a definable linear order $\leq$. Let $f:X\to \overline V$ be a definable function. Then $X= D$ and $f = \pi$.
\end{lemma}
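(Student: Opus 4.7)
The plan is to argue by contradiction in three successive steps. By Lemma~\ref{lem: naming parameters} I may assume that $f$, $\pi$, $D$, and $V$ are all $\emptyset$-definable. The proof establishes in turn $X \subseteq D$, then $X = D$, and finally $f = \pi$. In each step, a violation of the claim will be used to build at least $2^{n-1}$ pairwise non-isomorphic substructures of $M$ of size $n$, contradicting that $M$ has few substructures.

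For \textbf{Step 1} I assume $X \cap D = \emptyset$, the only alternative to $X \subseteq D$ since $X$ is transitive and $D$ is $\emptyset$-definable. The image $f(X)$ is a single $\aut(M)$-orbit in $\overline V$, and since $V$ is itself an orbit there, either $f(X) = V$ or $f(X) \cap V = \emptyset$. In the first subcase, for $v_0 < \cdots < v_{n-1}$ in $V$ and $\sigma \in 2^n$, I form $A_\sigma$ of size $n$ whose $i$-th element is some $a_i \in D$ with $\pi(a_i)=v_i$ (when $\sigma_i=0$) or some $b_i \in X$ with $f(b_i)=v_i$ (when $\sigma_i=1$): the disjoint $\emptyset$-definable predicates $D$ and $X$ distinguish the two kinds of point, while $\pi$ and $f$ recover the linear order on $\{v_i\}$, so distinct $\sigma$ yield non-isomorphic $A_\sigma$. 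In the second subcase I interleave: the orbit $f(X) \subseteq \overline V \setminus V$ is dense between any two elements of $V$ (since the reduct of $V$ to $\leq$ is an $\omega$-categorical transitive linear order, hence isomorphic to $(\mathbb Q,\leq)$), so for $v_0 < v_0' < v_1 < v_1' < \cdots$ in $V$ I pick $b_i \in X$ with $v_i < f(b_i) < v_i'$ and $a_i \in D$ with $\pi(a_i)=v_i$ and build $A_\sigma$ by the same coding.

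\textbf{Step 2} uses the analogous coding: if $X \subsetneq D$, then $D \setminus X$ is $\emptyset$-definable and nonempty, and $\pi(D \setminus X) = V$ by transitivity of $V$; for $v_0 < \cdots < v_{n-1}$ in $V$ I pick $a_i \in D \setminus X$ and $b_i \in X$ both with $\pi$-value $v_i$, and code $\sigma$ by the $\emptyset$-definable predicate $X$. \textbf{Step 3} handles the remaining claim with $X = D$: I assume $f \neq \pi$, where $\pi$ is viewed as landing in $\overline V$ via $v \mapsto \{u : u \leq v\}$. Transitivity of $X$ and $\aut(M)$-invariance of the three sets $\{x : f(x) < \pi(x)\}$, $\{x : f(x) = \pi(x)\}$, $\{x : f(x) > \pi(x)\}$ force one of them to be all of $X$; WLOG $f(x) < \pi(x)$ always. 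For $\sigma \in 2^{n-1}$ I would choose $x_0^\sigma, \ldots, x_{n-1}^\sigma \in X$ with $\pi$-values strictly increasing and with $f(x_i^\sigma)$ placed in the interval $(\pi(x_{i-1}^\sigma), \pi(x_i^\sigma)) \subseteq \overline V$ when $\sigma_i = 1$, or strictly below $\pi(x_{i-1}^\sigma)$ when $\sigma_i = 0$. The pattern $\sigma$ is then recorded in the substructure $A_\sigma = \{x_i^\sigma : i < n\}$ since $\pi$, $f$, and $\leq$ all pull back to $\emptyset$-definable relations on $M$, yielding exponentially many non-isomorphic substructures.

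The \textbf{main obstacle} is realizing all these patterns in Step~3: it requires enough flexibility in the pair $(\pi, f)$. This is straightforward when some $\pi$-fiber contains elements with distinct $f$-values (a freedom promoted to every fiber by transitivity of $V$), but it fails if $f$ factors through $\pi$ as $f = g \circ \pi$ for an $\aut(M)$-equivariant $g : V \to \overline V$ with $g(v) < v$ uniformly: then the sign pattern is fixed by $g$ and the coding collapses. To rule this out I would combine Lemma~\ref{lem: weak o-minimal} with the transitivity of $V$ to pin down $g$ rigidly, showing that $g$ must agree with the natural embedding $v \mapsto \{u : u \leq v\}$, which gives $f = \pi$ and completes the proof.
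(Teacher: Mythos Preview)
Your Steps 1 and 2 are correct in spirit but more elaborate than needed. Since $X$ is transitive and $D$ is $\emptyset$-definable, either $X\subseteq D$ or $X\cap D=\emptyset$; in both cases $X\neq D$ gives $D\setminus X\neq\emptyset$, and the paper handles this in one stroke by choosing $a_i^\sigma\in D\setminus X$ when $\sigma(i)=0$ and $a_i^\sigma\in X$ when $\sigma(i)=1$, with the combined values (via $\pi$ on $D\setminus X$ and $f$ on $X$) strictly increasing in $\overline V$. This avoids your subcase split in Step~1; note also that your density justification there invokes automorphisms of $(V,\leq)$, which need not all arise from $\aut(M)$.

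The genuine gap is in Step~3. Your coding requires placing $f(x_i^\sigma)$ relative to $\pi(x_{i-1}^\sigma)$, which means simultaneously controlling $\pi$ and $f$ of the \emph{new} point. You correctly flag that this fails when $f=g\circ\pi$ factors through $\pi$, but your proposed fix --- forcing $g$ to be the natural embedding --- does not work: the map $g(v)=\{u\in V:u<v\}$ is $\emptyset$-definable and equivariant with $g(v)<v$ strictly in $\overline V$, yet is not the natural embedding. With this $g$ your sign pattern is indeed forced (one always has $f(x_i)>\pi(x_{i-1})$ once $\pi(x_i)>\pi(x_{i-1})$), so the coding collapses and your outlined rescue via Lemma~\ref{lem: weak o-minimal} cannot succeed as stated.

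The paper avoids this obstacle by a small but decisive change of viewpoint: it constrains $\pi$ of the \emph{new} point relative to $f$ of the \emph{previous} one. Taking without loss $f(a)>\pi(a)$ for all $a$, one builds $c_0,c_1,\ldots$ with $\pi(c_{i+1})>\pi(c_i)$ and
\[
\pi(c_{i+1})<f(c_i)\ \Longleftrightarrow\ \sigma(i)=0.
\]
Here $f(c_i)$ is fixed once $c_i$ is chosen, and one only needs to place $\pi(c_{i+1})$ either in the nonempty $V$-interval $(\pi(c_i),f(c_i))$ or above $f(c_i)$; both are always possible since $V$ is dense in $\overline V$ and has no maximum. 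This works uniformly --- in particular when $f$ factors through $\pi$ --- and $\sigma$ is recovered from the $\emptyset$-definable binary relation $\pi(y)<f(x)$ on $X$.
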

\begin{proof}
	Assume that $D\setminus X$ is not empty. Then, by transitivity of $V$, $\pi$ maps $D\setminus X$ onto $V$. Fix $n<\omega$. For $\sigma \in {}^n 2$, let $(a^\sigma_i:i<n)$ be a sequence of elements of $M$ such that:
	
	$\bullet$ $a^\sigma_i \in D\setminus X$ if $\sigma(i)=0$, and $a^\sigma_i \in X$ if $\sigma(i)=1$;
	
	$\bullet$ $g_i(a^\sigma_i)<g_{i+1}(a^\sigma_{i+1})$, for each $i<n-1$, where $g_\bullet$ is equal to $f$ if applied to an element of $X$ and to $\pi$ otherwise.
	
	Set $A_\sigma = \{a^\sigma_i:i<n\}$. Then the substructures $A_\sigma$ are pairwise non-isomorphic, and hence contradict $M$ having few substructures. Therefore $X= D$.
	
	
	Assume that $f\neq \pi$. Then without loss of generality (using transitivity of $X$), we have $f(a)>\pi(a)$ for all $a\in X$. Let $\sigma \in {}^n 2$ and build a sequence $(c^{\sigma}_i:i<n)$ of elements of $X$ inductively: let $c{\sigma}_0$ be any element of $X$. Having built $c{\sigma}_i$, choose $c{\sigma}_{i+1}\in X$ so that $\pi(c{\sigma}_{i+1})>\pi(c{\sigma}_i)$ and \[\pi(c{\sigma}_{i+1})<f(c{\sigma}_i) \iff \sigma(i)=0.\]

This again gives us $2^n$ many substructures of size $n$.
\end{proof}

It follows in particular that if $V$ is as above, then any parameter-definable subset of $V$ has a maximal element, hence using Lemma \ref{lem: weak o-minimal}, the induced structure on $V$ is o-minimal: any parameter-definable subset of $V$ is a finite union of intervals.

\ignore{
\begin{prop}\label{prop: lines are dlo}
	Let $(V,\leq)$ be a linear order interpreted in dimension 1 over some $A$. Then there is a decomposition $V=V_1+\cdots +V_n$ into convex pieces such that over $A$, each $V_i$ is either finite or a pure DLO and there are no extra $A$-definable relations on $V$.
\end{prop}
\begin{proof}
	By Lemma \ref{lem: weak o-minimal}, any definable subset of $V$ is a finite union of convex sets. Let $V=V_1+\cdots+V_n$ be the decomposition of $V$ into complete types over $A$. We have to show that there is no extra structure on $V$ definable over $A$. We first show that there are no extra two types. Let $\phi(x,y)$ be an $A$-definable subset of $V^2$. For a fixed $a\in V$, the set $\phi(a,y)$ is a definable subset of $V$ and as such is a finite union of convex subsets. Any cut defined by those subsets is an element of $\overline V$ definable over $Aa$. By Lemma \ref{lem: no functions to orders}, such an element must be either equal to $a$ or definable over $A$ (indeed the projection to $V$ of an element of $A$). It follows that the intersection of $\phi(a,y)$ with any $V_i$ is equal to the intersection with $V_i$ of a set of the form $a \square y$, where $\square \in \{=,\neq,<,>,\leq,\geq\}$. Therefore $\phi(x,y)$ is quantifier-free definable from unary predicates for the $V_i$'s and the order.
	
	Since is true over any $A$, so by an immediate induction, we obtain that any subset of $V^n$ definable over $A$ is quantifier-free definable from unary predicates for the $V_i$'s and the order, which proves the lemma.
\end{proof}
}

\begin{prop}\label{prop: lines are dlo}
	Let $D\subseteq M$ be a definable subset and $\pi:D \to V$ an interpretable map, where $V$ is infinite, transitive and admits a definable linear order. Then $\pi$ has finite fibers and any formula $\phi(x_1,\ldots,x_n)$ definable over some $\bar b\in M^k$ is equivalent to a finite boolean combination of formulas of the form:
	
	$\bullet$ $\pi(x_i) \mathrel{\square} \pi(x_j)$, where $\square \in \{=,\leq\}$;

	$\bullet$ $\pi(x_i)< \pi(a)$, for some $a\in D(\bar b)$;

	$\bullet$ $x_i = a$, for some $a\in \pi^{-1}(\pi(D(\bar b))$.
\end{prop}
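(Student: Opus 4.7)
The finite-fibers claim is immediate from Lemma \ref{lem: finite fibers}, so the content of the proposition is quantifier elimination down to the listed primitives. The plan is to first prove a key sub-lemma: every $\bar b$-definable element of $V$ lies in $\pi(D(\bar b))$. I argue by induction on the length of $\bar b$. Given $\bar b = \bar b' \cdot b$ and $v \in V$ that is $\bar b$-definable but, by induction, not $\bar b'$-definable, I extend the defining formula $\psi(y, \bar b', b)$ to a $\bar b'$-definable function $f : X \to V$, where $X$ is the $\bar b'$-orbit of $b$ in $M$ (each $b'' \in X$ is sent to the unique $y$ with $\psi(y, \bar b', b'')$). Since $f$ is $\bar b'$-equivariant and $X$ is a single $\bar b'$-orbit, the image $V^* := f(X)$ is a single $\bar b'$-orbit of $V$; a singleton image would contradict $v \notin \dcl(\bar b')$, so $V^*$ is an infinite $\bar b'$-orbit, transitive over $\bar b'$, with the linear order inherited from $V$. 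Now I apply Lemma \ref{lem: no functions to orders} inside $M_{\bar b'}$ (which has few substructures by Lemma \ref{lem: naming parameters}) to the interpretable map $\pi^* : \pi^{-1}(V^*) \cap D \to V^*$ and the definable function $f : X \to V^* \subseteq \overline{V^*}$, which forces $X = \pi^{-1}(V^*) \cap D$ and $f = \pi$ on $X$. Therefore $b \in D$ and $v = \pi(b) \in \pi(D(\bar b))$.

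Combined with Lemma \ref{lem: weak o-minimal} and the o-minimality remark following Lemma \ref{lem: no functions to orders}, the sub-lemma implies that every $\bar b$-definable subset of $V$ is a finite union of intervals whose endpoints are in $\pi(D(\bar b)) \cup \{\pm\infty\}$. I then prove the proposition by induction on $n$. For $n = 1$, let $\phi \subseteq D$ be $\bar b$-definable and decompose $\pi(\phi)$ into finitely many intervals as above. Between consecutive $\bar b$-definable points lies an open interval $P \subseteq V$ on which $\aut(M/\bar b)$ acts transitively. I claim $\phi \cap \pi^{-1}(P)$ is either empty or all of $\pi^{-1}(P) \cap D$: otherwise, $\bar b$-transitivity of $P$ forces every fiber $\pi^{-1}(v)$ with $v \in P$ to meet both $\phi$ and its complement, and then choosing points $v_1 < \cdots < v_n$ in $P$ and alternating the choice of an element of $\phi$ versus of its complement within each fiber yields $2^n$ pairwise non-isomorphic substructures of size $n$ in $M_{\bar b}$ (distinguished by the $\pi$-ordered pattern of $\phi$-membership), contradicting Lemma \ref{lem: naming parameters}. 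For the boundary fibers $\pi^{-1}(\pi(a))$ with $a \in D(\bar b)$, which are finite, $\phi$ meets them in a finite set expressible as a disjunction $\bigvee x = c$ with each $c \in \pi^{-1}(\pi(D(\bar b)))$. Assembling these pieces gives the claimed Boolean combination.

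For $n \geq 2$, I fix the first $n-1$ coordinates as parameters. For each $\bar c \in D^{n-1}$, $\phi(\bar c, y)$ is a $\bar b\bar c$-definable subset of $D$, so by the $n=1$ case it is a Boolean combination of primitives $\pi(y) \mathrel{\square} \pi(c_i)$ for $\square \in \{=, \leq\}$, $\pi(y) < \pi(a)$ for $a \in D(\bar b)$, and $y = a$ for $a \in \pi^{-1}(\pi(D(\bar b\bar c)))$. By $\omega$-categoricity only finitely many such Boolean combinations arise as $\bar c$ varies over $D^{n-1}$, and the $\bar b$-definable set of tuples realizing a given one is, by the induction hypothesis for $n-1$ applied to this indicator formula, a Boolean combination of the listed primitives in $x_1, \ldots, x_{n-1}$. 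Reintroducing the $c_i$ as the variables $x_i$: each $\pi(y) \mathrel{\square} \pi(c_i)$ becomes the listed primitive $\pi(x_n) \mathrel{\square} \pi(x_i)$, each equality $y = c_i$ becomes built-in variable equality $x_n = x_i$, and each $y = a$ with $a$ in the fiber of some $c_i$ is captured uniformly by $\pi(x_n) = \pi(x_i)$ together with the finitely many possibilities for $a$ inside this finite fiber (while, for $a$ in a fiber over $\pi(D(\bar b))$, directly by the listed primitive $x_n = a$). Piecing these together yields the required Boolean combination for $\phi$.

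The main obstacle is the sub-lemma: one must pass to $M_{\bar b'}$ and restrict to a transitive infinite $\bar b'$-orbit $V^* \subseteq V$ in order to invoke Lemma \ref{lem: no functions to orders}, because after naming $\bar b'$ the ambient sort $V$ typically breaks into several orbits. A secondary subtlety, in the inductive step on $n$, is the uniformization: one must verify that each parameter-dependent primitive arising from the $n = 1$ case becomes a listed primitive (or variable equality) once the parameters are promoted to variables.
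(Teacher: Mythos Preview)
Your proof is correct and follows essentially the same route as the paper: the key sub-lemma $\dcl(\bar c)\cap V=\pi(D(\bar c))$ is established identically via Lemma~\ref{lem: no functions to orders} applied in $M_{\bar c_*}$ (your induction on $|\bar b|$ is the paper's choice of minimal $l$), and quantifier elimination then proceeds by induction on $n$. The only notable variation is in the base case $n=1$: you give a direct $2^n$-counting argument on fibers, whereas the paper reapplies Lemma~\ref{lem: no functions to orders} to conclude that any infinite transitive $X\subseteq D$ over parameters $A$ satisfies $X=\pi^{-1}(\pi(X))$; both arguments work and the inductive step is handled at the same level of detail in each.
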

\begin{proof}
	By Lemma \ref{lem: finite fibers}, $\pi$ has finite fibers.
	Let $\bar c=(c_0,\ldots,c_k)$ be a finite tuple of elements of $M$ and assume that some element $v\in V$ is definable (equivalently, algebraic) over $\bar c$. Take $l$ minimal so that $v\in \dcl(c_0,\ldots,c_l)$. Set $\bar c_* = (c_0,\ldots,c_{l-1})$. Then over $\bar c_*$, we have a definable function $f$ defined on the locus $X$ of $\tp(c_l/\bar c_*)$ with image in some infinite transitive $W\subseteq V$ and sending $c_l$ to $v$. By Lemma \ref{lem: no functions to orders} applied in $M_{\bar c_*}$ (using Lemma \ref{lem: naming parameters}), we conclude that $c_l\in D$ and $f=\pi|_X$. It follows that $\dcl(\bar c)\cap V=\pi(D(\bar c))$ and therefore also \[\acl(\bar c)\cap D = \pi^{-1}(\pi(D(\bar c))).\]
	
	Since the structure on $V$ is o-minimal, any subset of $V$ definable over some parameters $\bar c$ is a boolean combination of intervals with endpoints in $\pi(D(\bar c))$. Hence by induction on $n$, any subset $X\subseteq V^n$ definable over $\bar c$ is a boolean combination of definable sets of the form
	
	$\bullet$ $x_i \mathrel{\square} c_*$, for $\square \in \{=,\leq\}$ and $c_* \in \pi(D(\bar c))$;
	
	$\bullet$ $x_i \mathrel{\square} x_j$, for $\square \in \{=,\leq\}$.

	Finally, let $X\subseteq D$ be definable and transitive over some parameters $A$. Assume that $X$ is infinite and let $W=\pi(X)$ and $D'=\pi^{-1}(W)$. Then $W$ is transitive over $A$. By Lemma \ref{lem: no functions to orders} (adding $A$ to the base, using Lemma \ref{lem: naming parameters}), $X=D'$. It follows that any parameter-definable subset of $D$ is a union of a finite set and the pullback of a definable subset of $V$. By the previous discussion, any finite subset of $D$ definable over some $\bar c$ is in $\pi^{-1}(\pi(D(\bar c))$. The result now follows.
\end{proof}

\begin{prop}\label{prop: circles are dlo}
	Let $D\subseteq M$ be a definable subset and $\pi:D \to V$ an interpretable map, where $V$ is infinite, transitive and admits a definable circular order $C(x,y,z)$. Then $\pi$ has finite fibers and any formula $\phi(x_1,\ldots,x_n)$ over some $\bar b\in M^k$ is equivalent to a finite boolean combination of formulas of the form:
	
	$\bullet$ $\pi(x_i) = \pi(x_j)$;
	
	$\bullet$ $C(\pi(x_i),\pi(x_j),\pi(x_k))$;
	
	$\bullet$ $C(\pi(a_0),\pi(x_i),\pi(x_j))$, for some $a_0\in D(\bar b)$;

	$\bullet$ $C(\pi(a_0),\pi(a_1),\pi(x_i))$, for some $a_0,a_1\in D(\bar b)$;	

	$\bullet$ $x_i = a$, for some $a\in \pi^{-1}(\pi(D(\bar b))$.
\end{prop}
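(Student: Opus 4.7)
The approach I would take is to mimic the proof of Proposition \ref{prop: lines are dlo} step by step, systematically replacing the linear order by the circular order. The key observation is that after naming a single point $a_0 \in D$, the formula $C(\pi(a_0), x, y)$ defines a linear order on $V \setminus \{\pi(a_0)\}$, so the circular setting is locally no richer than the linear one once a base point is fixed.

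For finite fibers I would pick an arbitrary $a_0 \in D$ and work in $M_{a_0}$, which still has few substructures by Lemma \ref{lem: naming parameters}. There $V \setminus \{\pi(a_0)\}$ is an infinite set with a $\emptyset$-definable linear order; $\omega$-categoricity gives an infinite $\emptyset$-transitive subset $V_0$, to which Lemma \ref{lem: finite fibers} applies, and transitivity of $V$ over $\emptyset$ in $M$ propagates finite fibers to all of $\pi$.

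Next I would establish circular analogs of Lemmas \ref{lem: weak o-minimal} and \ref{lem: no functions to orders}: (i) every parameter-definable subset of $V$ is a finite union of arcs, and (ii) if $X \subseteq M$ is definable and transitive over some $\bar c_*$ and $f : X \to V$ is a $\bar c_*$-definable function with $f(X)$ infinite, then $X \subseteq D$ and $f = \pi|_X$. Both follow from the same $2^n$-substructure arguments as in the linear proofs, with the $n$ witnesses now arranged in cyclic rather than linear order and the pattern of zeros and ones recording the relevant alternative; the number of pairwise non-isomorphic binary necklaces of length $n$ is at least $2^n/n$, still incompatible with few substructures. (Alternatively, both lemmas can be deduced from their linear counterparts in $M_{a_0}$ for a fixed auxiliary $a_0 \in D$.)

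With those lemmas, the algebraic-closure equality $\acl(\bar c) \cap D = \pi^{-1}(\pi(D(\bar c)))$ follows exactly as in the proof of Proposition \ref{prop: lines are dlo}, by choosing a minimal witness $c_l \in \bar c$ and forcing both $c_l \in D$ and the algebraic element to equal $\pi(c_l)$. The classification of $\bar b$-definable sets is then obtained by induction on the number of variables: in one variable, any $\bar b$-definable subset of $V$ is a finite union of arcs whose endpoints — being algebraic over $\bar b$ — lie in $\pi(D(\bar b))$, hence a boolean combination of the permitted atoms $C(\pi(a_0), \pi(a_1), x)$ and $x = \pi(a)$; the induction steps up to arbitrary $n$ by fibering one variable at a time. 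The main subtlety I expect is bookkeeping the different shapes of atoms: unlike the two-place $\leq$ in the linear case, an arc of $C$ can have zero, one, or two endpoints in $D(\bar b)$, producing the three shapes $C(\pi(x_i), \pi(x_j), \pi(x_k))$, $C(\pi(a_0), \pi(x_i), \pi(x_j))$, and $C(\pi(a_0), \pi(a_1), \pi(x_i))$ listed in the statement, and one must check carefully that each cut in the induction is routed to the correct shape (and that no unwanted three-parameter atom is generated).
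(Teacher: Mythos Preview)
Your plan is sound and would go through, but the paper takes a shorter route that you almost hit upon in your parenthetical remark. Rather than establishing circular analogues of Lemmas~\ref{lem: weak o-minimal} and~\ref{lem: no functions to orders} and then rerunning the whole argument of Proposition~\ref{prop: lines are dlo}, the paper simply names a point $a\in D$ and invokes Proposition~\ref{prop: lines are dlo} itself as a black box. The one idea you are missing is that the auxiliary point must be chosen \emph{after} the arc endpoints are in hand: given a $\bar b$-transitive $X\subseteq V$, the paper first observes (via Lemma~\ref{lem: weak o-minimal} over an arbitrary $a$) that $X$ is a finite union of arcs with endpoint tuple $\bar d$ algebraic over $\bar b$, and then, using transitivity of $V$, \emph{re-chooses} $a$ so that no coordinate of $\bar d$ is algebraic over $a$ alone. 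Each $d_i$ now sits in an infinite $a$-transitive linearly ordered set, so Proposition~\ref{prop: lines are dlo} (applied in $M_a$ with parameter tuple $\bar b$) forces $d_i\in\pi(D(\bar b))$, with no contamination from $a$. The induction on $n$ is then identical to the linear case. This manoeuvre also sidesteps the one genuinely delicate spot in your direct plan: the second half of Lemma~\ref{lem: no functions to orders} (the case $X=D$, $f\neq\pi$) does not transfer verbatim to circles, since there is no global inequality ``$f(a)>\pi(a)$'' to anchor the $2^n$ construction---you would have to name a base point to make sense of it, and once you do, the paper's wholesale reduction to the linear proposition is cleaner than patching the lemma.
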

\begin{proof}
	Let $\bar b$ be any finite tuple of parameters from $M$ and let $X\subseteq V$ be definable and transitive over $\bar b$. If $a\in D$, then $V$ admits an $a$-definable linear order. Therefore by Lemma \ref{lem: weak o-minimal}, and using Lemma \ref{lem: naming parameters} to work over $a$, $X$ is a finite union of intervals of $V$. Let $\bar d$ enumerate the endpoints of those intervals. Then $\bar d$ is algebraic over $\bar b$ and hence definable over $\bar b\hat{~}a$. By transitivity of $V$, we can choose $a$ so that no element of $\bar d$ is algebraic over $a$. But then each such element lies in some infinite linearly-ordered set, definable and transitive over $a$. By Proposition \ref{prop: lines are dlo}, $\bar d \in \pi(D(\bar b))$.
	
	The conclusion now follows by induction as in the linearly ordered case.
\end{proof}

\begin{cor}\label{cor: no structure on orders}
	Let $D\subseteq M$ be definable and $\pi:D \to V$ an interpretable map, where $V$ is infinite, transitive and admits a definable separation relation. Let $\bar b$ be a tuple of elements of $M$, disjoint from $D$. Then $V$ is transitive over $\bar b$ and its structure over $\bar b$ is precisely one of the 4 unstable reducts of DLO.
\end{cor}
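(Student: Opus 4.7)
The plan is to extend the strategy of Proposition~\ref{prop: circles are dlo} by naming three generic elements of $D$ (instead of one), breaking the reversal and rotation symmetries of the separation relation to expose a definable linear order on each of the three resulting arcs, and then applying Proposition~\ref{prop: lines are dlo} arc by arc before descending back to $\bar b$ via $\aut(M/\bar b)$-invariance and Cameron's classification of reducts of DLO. Pick $a_0,a_1,a_2\in D$ with $\pi(a_0),\pi(a_1),\pi(a_2)$ pairwise distinct and each $\pi(a_k)\notin \acl(\bar b\hat{~}a_0\hat{~}\cdots\hat{~}a_{k-1})$; these exist because $\omega$-categoricity forces $\acl(A)\cap V$ to be finite for any finite $A$. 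Over $\bar b a_0 a_1 a_2$ the separation relation partitions $V\setminus\{\pi(a_0),\pi(a_1),\pi(a_2)\}$ into three arcs, each carrying a definable linear order induced by the chosen cyclic order on the three points.

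Passing to $M_{\bar b a_0 a_1 a_2}$ via Lemma~\ref{lem: naming parameters}, Proposition~\ref{prop: lines are dlo} applies to each arc $W$ (through $\pi|_{\pi^{-1}(W)}:\pi^{-1}(W)\to W$), yielding that $\pi$ has finite fibers, that $\acl(\bar b a_0 a_1 a_2)\cap V=\{\pi(a_0),\pi(a_1),\pi(a_2)\}$, and that every $\bar b a_0 a_1 a_2$-definable subset of $V^n$ is a boolean combination of the atoms listed there (order comparisons and equalities, arc by arc). In particular $\acl(\bar b)\cap V$, being contained in $\acl(\bar b a_0 a_1 a_2)\cap V$, is empty by the generic choice of the $a_k$'s. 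For transitivity of $V$ over $\bar b$, any $\bar b$-definable $X\subseteq V$ is, over $\bar b a_0 a_1 a_2$, a finite union of intervals within the three arcs possibly together with the marked points; $\aut(M/\bar b)$-invariance of $X$---which lets us send $(a_0,a_1,a_2)$ to any other equally generic triple in $D$---forces the defining combination to be independent of the parameters, so $X=\emptyset$ or $X=V$.

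The same boolean-form analysis in every arity identifies $\aut(M/\bar b)|_V$ as a closed oligomorphic subgroup of the symmetric group on $V$ that contains $\aut(V,\leq,\pi(a_0),\pi(a_1),\pi(a_2))$, acts transitively on $V$, and is contained in $\aut(V,S)$; by Cameron's classification \cite{Cameron:linear} of reducts of DLO, this group must coincide with the automorphism group of exactly one of the four unstable reducts. The main obstacle is this final step: upgrading the inclusions plus $1$-transitivity on $V$ to the statement that $\aut(M/\bar b)|_V$ is one of the four full reduct automorphism groups rather than a proper refining subgroup. The plan is to iterate the boolean-form/invariance argument at every arity $n$---any $\bar b$-definable relation on $V^n$ is expressible over $\bar b a_0 a_1 a_2$ as a boolean combination in $\leq$ and the $\pi(a_k)$'s, and $\aut(M/\bar b)$-invariance upon moving the triple forces the combination to depend only on equalities and the separation relation, possibly augmented by a $\bar b$-definable orientation or cut refining $S$, which is exactly one of the four unstable reducts.
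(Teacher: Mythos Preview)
Your argument has a circularity at the point where you invoke Proposition~\ref{prop: lines are dlo} for each arc $W$ in $M_{\bar b a_0 a_1 a_2}$. That proposition requires the target $W$ to be \emph{transitive} over the current base; but transitivity of the arcs over $\bar b a_0 a_1 a_2$ is exactly what is at stake (it would follow from the conclusion of the corollary, not from its hypotheses). Your derivation of $\acl(\bar b a_0 a_1 a_2)\cap V=\{\pi(a_0),\pi(a_1),\pi(a_2)\}$ likewise rests on Proposition~\ref{prop: lines are dlo}, so you cannot use it to establish the missing transitivity. The underlying problem is that naming three \emph{real} points of $D$ destroys transitivity of $V$, and nothing so far prevents the induced structure from carving each arc into several pieces before you know otherwise.

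The paper sidesteps this by naming a different kind of parameter: one of the two circular orders compatible with $S$ (equivalently, an algebraic imaginary with two conjugates), rather than points of $D$. Because this ``orientation'' parameter does not single out any element of $V$, the set $V$ remains transitive in the expansion, and Proposition~\ref{prop: circles are dlo} applies directly with the tuple $\bar b$. Since $\bar b\cap D=\emptyset$, the list of atomic formulas in that proposition collapses to equalities and instances of $C$, so the $\bar b$-structure on $V$ in the expansion is the pure dense circular order; passing to the reduct $M$ then automatically yields one of the four unstable reducts of DLO, with no need for a separate invariance-and-Cameron argument. The analogous reduction handles the betweenness case via Proposition~\ref{prop: lines are dlo}. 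If you want to rescue your three-point approach, the clean way is to observe that the circular order you obtain over $a_0,a_1,a_2$ has only two $\aut(M)$-conjugates, add \emph{that relation} (not the points) to the language, and then proceed as the paper does.
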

\begin{proof}
	If $V$ admits a definable linear or circular order, then the result follows at once from the two previous propositions. Assume that $V$ admits a definable betweenness relation. Then there are two linear orders that induce it, and each are definable over a finite set of parameters. Hence by Lemma \ref{lem: naming parameters}, if we add a predicate for either of those two orders, the structure still has few substructures and furthermore $V$ is still transitive. Then the structure on $V$ over $\bar b$ in the expanded language is a pure DLO. In the reduct, it is therefore a reduct of DLO. The case where $V$ has a definable separation relation is similar.
\end{proof}

\section{Glueing orders}

We again assume that $M$ has few substructures.

A definable set $D_{\bar a}\subseteq M$, definable over some $\bar a$ is \emph{almost linear} (over $\bar a$) if there is a $\bar a$-definable equivalence relation $E_{\bar a}$ with infinitely many classes and a $\bar a$-definable linear order on the quotient $V_{\bar a}:=D_{\bar a}/E_{\bar a}$. By Lemma \ref{lem: finite fibers}, if $V_{\bar a}$ is transitive over $\bar a$ (in particular, if $D_{\bar a}$ is transitive over $\bar a$), then the $E_{\bar a}$-classes are finite. By an interval of $D_{\bar a}$, we mean the pullback of an interval of $V_{\bar a}$ to $D_{\bar a}$. We will sometimes think of $\leq_{\bar a}$ as defining a quasi-order on $D_{\bar a}$.

\begin{lemme}\label{lem: equialgebraic fibers}
	Let $D_{\bar a}\subseteq M$ be almost linear and transitive over some tuple $\bar a$ witnessed by $E_{\bar a}$ and $(V_{\bar a},\leq_{\bar a})$. Let $c \in D_{\bar a}$. Then for any $c'\in M$, the following are equivalent:
	
	\begin{enumerate}
		\item $c'\in D_{\bar a}$ and $E_{\bar a}$-equivalent to $c$;
		\item $c\in \acl(c')$.
	\end{enumerate}
	
	\noindent
	In particular the $E_{\bar a}$-class of $c$ is composed precisely of the elements equialgebraic with $c$.
\end{lemme}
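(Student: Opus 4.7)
The plan is to reduce to the case where all the relevant data are $\emptyset$-definable and then invoke the structure theorems from Section~3. By Lemma~\ref{lem: naming parameters}, the expansion $M_{\bar a}$ also has few substructures, and in it $D_{\bar a}$, $E_{\bar a}$, $V_{\bar a}$ and $\leq_{\bar a}$ are all $\emptyset$-definable. The projection $\pi : D_{\bar a} \to V_{\bar a}$ is therefore an $\emptyset$-interpretable map from a subset of $M_{\bar a}$ to an infinite, transitive, linearly ordered imaginary set, so both Lemma~\ref{lem: finite fibers} and Proposition~\ref{prop: lines are dlo} apply in $M_{\bar a}$. All algebraic closures below are computed there; that is, $\acl(c')$ stands for $\acl(\bar a c')$ in $M$.

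For $(1)\Rightarrow(2)$, Lemma~\ref{lem: finite fibers} says that every fiber of $\pi$ is finite, so the $E_{\bar a}$-class of $c'$ is a finite, $c'$-definable set. By hypothesis it contains $c$, hence $c\in\acl(c')$.

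For $(2)\Rightarrow(1)$, I invoke the identity \[\acl(c')\cap D_{\bar a}=\pi^{-1}\bigl(\pi(D_{\bar a}(c'))\bigr)\] furnished by Proposition~\ref{prop: lines are dlo}. Since $c\in D_{\bar a}$ and $c\in\acl(c')$, this forces $\pi(c)\in\pi(D_{\bar a}(c'))$. But $c'$ is a single element, so $D_{\bar a}(c')$ is either empty or equal to $\{c'\}$; non-emptiness of $\pi(D_{\bar a}(c'))$ therefore forces $c'\in D_{\bar a}$, and then $\pi(c)=\pi(c')$, which is exactly $E_{\bar a}(c,c')$. The ``in particular'' clause is then immediate: equialgebraicity of $c$ and $c'$ means both $c\in\acl(c')$ and $c'\in\acl(c)$, each of which yields $E_{\bar a}(c,c')$ by the equivalence just proved.

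There is no real obstacle---the heavy lifting has been done in Proposition~\ref{prop: lines are dlo}. The only point requiring care is the convention that $\acl$ is taken in the expansion $M_{\bar a}$, so that it automatically sees the parameters $\bar a$ needed to define $D_{\bar a}$ and $E_{\bar a}$; without this, the implication $(1)\Rightarrow(2)$ would require extra argument, since the $E_{\bar a}$-class of $c'$ is \emph{a priori} only $\bar a c'$-definable.
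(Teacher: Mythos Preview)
Your proof has a genuine gap: you have misread the statement. The algebraic closure $\acl(c')$ in item~(2) is meant over~$\emptyset$ in $M$, \emph{not} over~$\bar a$. This is clear from how the lemma is used immediately afterwards: in Lemma~\ref{lem: uniqueness of germs} the conclusion ``the $E_{\bar a}$- and $E'_{\bar b}$-classes of $c$ coincide since they are both equal to the set of elements equialgebraic to $c$'' only makes sense if equialgebraicity is parameter-free; likewise the relation $F$ of equialgebraicity in Theorem~\ref{th: main decomposition} is $\emptyset$-definable. With your reading, the $E_{\bar a}$-class would only be described as ``elements equialgebraic with $c$ over $\bar a$,'' which gives no reason for it to match the $E'_{\bar b}$-class.

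Under the correct reading, $(2)\Rightarrow(1)$ is indeed easy (your argument works, since $\acl(c')\subseteq\acl(\bar a c')$). But $(1)\Rightarrow(2)$---that every element of the $E_{\bar a}$-class of $c$ is already algebraic over $c$ \emph{without} the parameters $\bar a$---is exactly the substantive content, and you have not proved it. You even flag this yourself in your last paragraph: ``without this, the implication $(1)\Rightarrow(2)$ would require extra argument.'' That extra argument is the whole point. The paper supplies it by conjugating $\bar a$ to some $\bar a'$ over $c$ so as to separate out the part of the $E_{\bar a}$-class not in $\acl(c)$, and then using Proposition~\ref{prop: lines are dlo} (applied over $\bar a'$) to derive a contradiction with transitivity of $D_{\bar a}$. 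Your proposal does not engage with this step at all.
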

\begin{proof}
	Let $c\in D_{\bar a}$. Enumerate the other elements in the $E_{\bar a}$-class of $c$ as $\bar c_0\hat{~}\bar c_1$, where $\bar c_0$ is composed of points in $\acl(c)$ and $\bar c_1$ of points outside of it. There is an automorphism $\sigma$ of $M$ fixing $c$ such that no element in $\sigma(\bar c_1)$ is algebraic over $\bar a\hat{~} c$ (since we can find infinitely many disjoint conjugates of that tuple). Let $\bar a'=\sigma(\bar a)$.
		
	The $E_{\bar a'}$-class of $c$ and the $E_{\bar a}$-class of $c$ intersect in precisely $c\hat{~}\bar c_0$. It follows that $c$ is algebraic over $\bar a\hat{~}\bar a'$. By Proposition \ref{prop: lines are dlo}, $\bar a$ must contain a point in the $E_{\bar a'}$-class of $c$. This point is algebraic over $\bar a\hat{~}c$, hence is in $\bar c_0$. This contradicts transitivity of $D_{\bar a}$.
	
	We have shown that (1) implies that $c'\in \acl(c)$. By symmetry of the roles of $c$ and $c'$, we also have $c\in \acl(c')$. Conversely, if $c$ is algebraic over some $c'\in M$, then by Proposition \ref{prop: lines are dlo}, $c'$ is in the $E_{\bar a}$-class of $c$.
\end{proof}

\begin{lemme}\label{lem: uniqueness of germs}
	Let $D_{\bar a}$ be as in the previous lemma and let also $D'_{\bar b}$ be transitive and almost linear over $\bar b$, witnessed by $E'_{\bar b}$ and $(V'_{\bar b},\leq_{\bar b})$. Assume that there is $c\in D_{\bar a} \cap D'_{\bar b}$, then $D_{\bar a}$ and $D_{\bar b}$ agree up to reversal on an open interval of $c$ (in the sense of both $\leq_{\bar a}$ and $\leq'_{\bar b}$).
\end{lemme}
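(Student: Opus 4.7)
The strategy is to work in the almost-linear quotient $(D_{\bar a},\pi_{\bar a})$ and apply Proposition \ref{prop: lines are dlo} to two $\bar a\hat{~}\bar b$-definable formulas: first the one expressing membership in $D'_{\bar b}$, then the pullback of $\leq'_{\bar b}$. The starting observation, immediate from Lemma \ref{lem: equialgebraic fibers}, is that on $D_{\bar a}\cap D'_{\bar b}$ the two equivalence relations $E_{\bar a}$ and $E'_{\bar b}$ coincide, since both classes of $c$ are precisely the set of elements equialgebraic with $c$. Consequently $\pi'_{\bar b}$ factors as $\tilde\pi\circ\pi_{\bar a}$ on this intersection, for some injective partial function $\tilde\pi$ from a subset of $V_{\bar a}$ to $V'_{\bar b}$, definable over $\bar a\hat{~}\bar b$.

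Applying Proposition \ref{prop: lines are dlo} to the unary formula ``$x\in D'_{\bar b}$'' on $D_{\bar a}$, one sees that $D_{\bar a}\cap D'_{\bar b}$ differs from the $\pi_{\bar a}$-pullback of a definable subset of $V_{\bar a}$ by a finite set. Since $c\in D'_{\bar b}$, there is an open interval $I\subseteq V_{\bar a}$ containing $\pi_{\bar a}(c)$, chosen to avoid the finitely many exceptional parameters in $\pi_{\bar a}(D_{\bar a}(\bar a\hat{~}\bar b))$, such that $\pi_{\bar a}^{-1}(I)\subseteq D'_{\bar b}$. I then apply the same proposition to the binary formula $\psi(x_1,x_2)\equiv\pi'_{\bar b}(x_1)\leq'_{\bar b}\pi'_{\bar b}(x_2)$, which, being $E_{\bar a}$-invariant in each variable on the intersection, descends cleanly to a definable subset of $V_{\bar a}\times V_{\bar a}$. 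After shrinking $I$ further to avoid the new parameters appearing in its decomposition, the induced relation on $I\times I$ is a Boolean combination of only the two atoms $v_1\leq v_2$ and $v_1=v_2$. Since $\tilde\pi$ is injective and pulls back the linear order $\leq'_{\bar b}$, this relation on $I\times I$ is itself a linear order, and among Boolean combinations of the two atoms the only linear orders are $\{v_1\leq v_2\}$ and $\{v_1\geq v_2\}$, so $\tilde\pi$ is monotonic on $I$.

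To finish, I would shrink $I$ to an open interval $I_*\subseteq I$ whose image under $\tilde\pi$ lies inside an open interval of $V'_{\bar b}$ around $\pi'_{\bar b}(c)$; this is possible by the strict monotonicity of $\tilde\pi$ and the o-minimality of $V'_{\bar b}$ noted after Lemma \ref{lem: no functions to orders}. Then $J_*=\pi_{\bar a}^{-1}(I_*)$ is an open interval of $c$ in both quasi-orders on which $\leq_{\bar a}$ and $\leq'_{\bar b}$ agree up to reversal. The delicate step is the descent of the binary formula $\psi$ to $V_{\bar a}\times V_{\bar a}$: this relies crucially on the coincidence $E_{\bar a}=E'_{\bar b}$ established at the outset, which lets Proposition \ref{prop: lines are dlo} be applied on the $V$-side in its cleanest form, and lets the parameter-dependent atoms be eliminated by shrinking the interval.
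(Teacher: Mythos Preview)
Your overall strategy matches the paper's, but you skip the one step that carries real content. When you write that $I$ can be ``chosen to avoid the finitely many exceptional parameters in $\pi_{\bar a}(D_{\bar a}(\bar a\hat{~}\bar b))$'' while still containing $\pi_{\bar a}(c)$, you are tacitly assuming that $\pi_{\bar a}(c)$ is not itself one of those parameters --- equivalently (via Proposition~\ref{prop: lines are dlo}) that $c\notin\acl(\bar a\hat{~}\bar b)$. This is precisely the point that needs an argument: if some coordinate $d$ of $\bar a\hat{~}\bar b$ lay in $D_{\bar a}$ with $\pi_{\bar a}(d)=\pi_{\bar a}(c)$, then by Lemma~\ref{lem: equialgebraic fibers} $d$ and $c$ would be equialgebraic, so $c\in\acl(\bar a)$ or $c\in\acl(\bar b)$, contradicting the transitivity of $D_{\bar a}$ or of $D'_{\bar b}$. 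The paper makes exactly this step explicit (phrasing it as an application of Lemma~\ref{lem: equialgebraic fibers} inside $M_{\bar a}$); without it your interval $I$ need not exist, and the rest of the argument does not get off the ground.

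There is a second, smaller gap at the end. Having $\tilde\pi(I_*)$ merely \emph{lie inside} an open interval of $V'_{\bar b}$ does not make $J_*=\pi_{\bar a}^{-1}(I_*)$ an interval of $D'_{\bar b}$; for that you need $\tilde\pi(I_*)$ to \emph{be} an interval of $V'_{\bar b}$. This does in fact hold --- a gap in $\tilde\pi(I_*)$ would yield a cut of $I_*$ definable over $\bar a\hat{~}\bar b$, hence realised by one of the parameters you have already excluded --- but you do not argue it. The paper sidesteps this by taking $I$ to be the locus of $\tp(c/\bar a\hat{~}\bar b)$, which by the symmetric argument is automatically an interval in both senses.
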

\begin{proof}
	The $E_{\bar a}$ and the $E'_{\bar b}$-classes of $c$  coincide since they are both equal to the set of elements equialgebraic to $c$. Applying Lemma \ref{lem: equialgebraic fibers} in the expansion $M_{\bar a}$, we see that if $c$ is algebraic (equiv. definable) over $\bar a\hat{~}\bar b$, then $\bar b$ contains some element in the $E_{\bar a}$-class of $c$. This implies that $c$ is algebraic over $\bar b$, which contradicts transitivity of $D'_{\bar b}$. Hence $c$ is not algebraic over $\bar a\hat{~}\bar b$. Therefore $c$ is not an end point of the intersection $D_{\bar a}\cap D'_{\bar b}$ in $D_{\bar a}$, and that intersection contains an open interval $I$ in the sense of $D_{\bar a}$. We may assume that $I$ is transitive over $\bar a\hat{~}\bar b$. By Proposition \ref{prop: lines are dlo}, the quasi-order induced by $D'_{\bar b}$ on $I$ is either the same or the reverse of that induced by $D_{\bar a}$. By exchanging the roles of $D_{\bar a}$ and $D'_{\bar b}$, we see that $I$ is also an interval of $D'_{\bar b}$.
\end{proof}

Let $\mathcal L$ denote the set of parameter-definable sets $D$ such that $D$ is almost linear and transitive over some finite tuple.

\begin{lemma}\label{lem: no forking}
	Let $D_1, D_2\in \mathcal L$ and assume that $D_1$ and $D_2$ have some interval $I$ in common, with the same induced (quasi-)order. Take $I$ to be maximal such. Then $I$ is cofinal either in $D_1$ or in $D_2$.
\end{lemma}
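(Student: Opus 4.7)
My plan is to argue by contradiction: assume $I$ is a maximal common interval with matching orders, yet it is cofinal in neither $D_1$ nor $D_2$. I would first absorb $\bar a_1$, $\bar a_2$, and a parameter for $I$ into a finite tuple $\bar d$; by Lemma \ref{lem: naming parameters}, $M_{\bar d}$ still has few substructures and all earlier results remain available. Let $T_1 = \sup \pi_1(I) \in \overline{V_1}$ and $T_2 = \sup \pi_2(I) \in \overline{V_2}$. By the non-cofinality assumption both cuts are proper, and they are both $\bar d$-definable.

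The next step is to locate the ``immediate successor'' of $I$ in $D_1$: let $W_1 \subseteq V_1$ be the transitive interval of $V_1$ over $\bar d$ just above $T_1$, and set $X_0 = \pi_1^{-1}(W_1) \subseteq D_1$. Then $X_0$ is an infinite transitive subset of $D_1$: if only finitely many points of $V_1$ sat above $T_1$, they would all be $\bar d$-algebraic, and after absorbing them into $\bar d$ one could extend $I$, contradicting its maximality. By transitivity of $X_0$, either $X_0 \cap D_2 = \emptyset$ or $X_0 \subseteq D_2$, and in the latter case Lemma \ref{lem: uniqueness of germs}, applied uniformly along $X_0$, forces $\leq_1$ and $\leq_2$ either to agree throughout $X_0$ or to be reversed throughout $X_0$.

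If the orders agree on $X_0 \subseteq D_2$, then $I \cup X_0$ is a common interval with matching orders strictly extending $I$ (any single algebraic fibre sitting at $T_1$ in $V_1$ or at $T_2$ in $V_2$ can be absorbed into $\bar d$ via Lemma \ref{lem: naming parameters} and is controlled through Proposition \ref{prop: lines are dlo} together with Lemma \ref{lem: uniqueness of germs} at the boundary), directly contradicting the maximality of $I$. In the remaining two sub-cases --- $X_0 \cap D_2 = \emptyset$, or $X_0 \subseteq D_2$ with reversed orders --- I would apply Lemma \ref{lem: no functions to orders} to the definable constant function $g \equiv T_2$ from $X_0$ into the completion of a well-chosen transitive piece $W \subseteq V_2$; its conclusion would force $X_0 = \pi_2^{-1}(W)$, contradicting either the disjointness of $X_0$ from $D_2$ or the reversal of the induced order (which prevents equality of the two interpretable maps).

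The main obstacle I foresee is precisely this last application of Lemma \ref{lem: no functions to orders}: the cut $T_2$ is $\bar d$-definable and hence lies on the boundary of every $\bar d$-transitive piece of $V_2$, which prevents it from being an interior element of $\overline W$ as the lemma nominally requires. I expect to circumvent this either by shifting to a smaller base over which a larger transitive piece of $V_2$ contains $T_2$ as an interior cut, or by replacing the functional argument with a direct $2^n$-substructure count modelled on the proof of Lemma \ref{lem: no functions to orders}, using $n$ elements of $I$ together with the varying cuts $T_1^{(k)}$ obtained by letting the tuple defining $D_2$ range through its $\bar a_1$-conjugates.
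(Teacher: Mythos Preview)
Your outline has two real gaps, one of which you already flag. First, the application of Lemma~\ref{lem: no functions to orders} with the constant map $g\equiv T_2$ cannot be made to work: $T_2$ is $\bar d$-definable, hence an endpoint of every $\bar d$-transitive piece $W$ of $V_2$, and the completion $\overline W$ as defined in the paper explicitly excludes the two extremal initial segments, so $T_2\notin\overline W$. Workaround (a) is circular: any base small enough to make $T_2$ interior to a transitive piece is too small for $T_2$ (and hence for $g$) to be definable. Workaround (b) is only a hope at this stage. Second, your Case~1 is incomplete in a way you do not mention: from $X_0\subseteq D_2$ with matching orders you infer that $I\cup X_0$ is a common interval of both, but you have only checked that $X_0$ is adjacent to $I$ in $D_1$. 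Nothing forces $X_0$ to sit immediately above $I$ in the $\leq_2$-sense; maximality of $I$ does not exclude a second, disjoint common interval elsewhere in $D_2$, so no contradiction follows directly.

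The paper's argument is shorter and avoids the case split entirely. One lets $c\in D_1$ and $d\in D_2$ be minimal upper bounds of $I$. Since $c\in\dcl(\bar a_1\hat{~}\bar a_2)$, the description of algebraic closure in Proposition~\ref{prop: lines are dlo} (applied in $M_{\bar a_1}$) gives that $c$ is interalgebraic with a coordinate of $\bar a_2$, hence $c\in\acl(\bar a_2)$. Now move $d$ down by automorphisms $\sigma_i\in\aut(M/\bar a_2)$ to produce conjugates $c_i=\sigma_i(c)$; these remain in $\acl(\bar a_2)$, but Lemma~\ref{lem: uniqueness of germs} shows they are pairwise distinct (if $c_i=c$ then $D_1$ and $\sigma_i(D_1)$ share a neighbourhood of $c$, yet their intersections with $D_2$ lie at different heights). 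Infinitely many points in $\acl(\bar a_2)$ is the contradiction. This is close in spirit to your option (b), but the move you are missing is to work over $\bar a_1$ alone, not over the full $\bar d$, when invoking the order-rigidity lemma, so that the output is algebraicity over $\bar a_2$ rather than over all of $\bar d$.
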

\begin{proof}
	Let $D_1$, $D_2$ be transitive almost linear over parameters $\bar a_1$ and $\bar a_2$ respectively. Assume that $I$ is cofinal neither in $D_1$ nor in $D_2$ and let $c\in D_1$ (resp. $d\in D_2$) be a minimal upper bound of $I$. Then $c$ is definable over $\bar a_1\hat{~}\bar a_2$. Applying Lemma \ref{lem: no functions to orders} to the expansion $M_{\bar a_1}$, we see that $c$ is interalgebraic (over $\emptyset$) with some element of $\bar a_2$.
	
	Let $d_0=d > d_1 > d_2 > \cdots$ be a decreasing sequence of elements of $D_2$. By transitivity of $D_2$ over $\bar a_2$, for each $i$, there is an automorphism $\sigma_i\in \aut(M/\bar a_2)$ sending $d$ to $d_i$. Let $D_{1,i}$ be the image of $D_1$ under $\sigma_i$ and let $I_i$ be the image of $I$ under $\sigma_i$. Then $c_i:=\sigma_i(c)$ is a minimal upper bound of $I_i$ in $D_{1,i}$. We claim that the elements $c_i$ are pairwise distinct. To see this, assume say that $c_1=c$. Then by Lemma \ref{lem: uniqueness of germs}, $D_1 \cap D_{1,1}$ contains an open neighborhood of $c=c_1$ in the sense of both $D_1$ and $D_{1,1}$. But this is absurd since small enough neighborhoods of $c$ in $D_1$ and $c_1$ in $D_{1,1}$ intersect $D_2$ in two disjoint, non-empty, intervals.

	The elements $c_i$ being conjugates of $c$ over $\bar a_2$ are all algebraic over $\bar a_2$, but there are only finitely many points algebraic over $\bar a_2$, hence we have reached a contradiction.
\end{proof}

\begin{cor}\label{cor: no forking}
	Let $D_0$, $D_1\in \mathcal L$ and let $I=D_1 \cap D_2$. Then one of the following holds:
	
	\begin{enumerate}
		\item $I=\emptyset$;
		\item for some $i<2$, $I$ is an initial segment of $D_i$ and an final segment of $D_{1-i}$ and the two orders agree on $I$;
		\item $I$ is an initial (resp. final) segment of both $D_0$ and $D_{1}$ and the two orders are opposite of each other on $I$;
		\item for some $i<2$, $I=I_1 \cup I_2$, where $I_1$ is an initial segment of $D_i$ and an final segment of $D_{1-i}$ and $I_2$ is a final segment of $D_i$ and an initial segment of $D_{1-i}$, and the two orders agree on each of $I_1$ and $I_2$;
		\item $I=I_1 \cup I_2$, where $I_1$ is an initial segment of both $D_0$ and $D_1$ and $I_2$ a final segment of $D_0$ and $D_1$ and the two orders are opposite of each other on each of $I_1$ and $I_2$.
	\end{enumerate}
\end{cor}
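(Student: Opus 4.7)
The plan is to analyze $I$ locally via Lemma \ref{lem: uniqueness of germs}, extract a finite decomposition from weak o-minimality, then use Lemma \ref{lem: no forking} to force each piece to reach an endpoint of $D_0$ or $D_1$, reducing everything to a short combinatorial enumeration.

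First I would observe that, by Lemma \ref{lem: uniqueness of germs}, at every $c \in I$ there is an open neighborhood (in both $D_0$ and $D_1$) contained in $I$ on which the two quasi-orders either agree or are opposite. In particular $I$ is open in each $D_i$, and both the locus where the orders agree and the locus where they reverse are clopen in $I$. Applying Lemma \ref{lem: weak o-minimal} to the image of $I$ in the quotient of $D_0$, together with the fact from Lemma \ref{lem: equialgebraic fibers} that the $E_0$- and $E_1$-classes coincide on $I$, shows that $I$ decomposes into finitely many maximal convex pieces $I_1, \ldots, I_k$, each being an interval of both $D_0$ and $D_1$, and on each $I_j$ the two orders globally agree or globally reverse.

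Next I would pin down each $I_j$ using Lemma \ref{lem: no forking}. When the orders agree on $I_j$, the lemma directly gives that $I_j$ is cofinal in $D_0$ or in $D_1$; applying the lemma again after replacing both linear orders by their reverses (which keeps everything in $\mathcal{L}$) shows that $I_j$ is also coinitial in $D_0$ or $D_1$. When the orders reverse on $I_j$, I would apply the same arguments to $D_0$ paired with $D_1$ equipped with its opposite order, still in $\mathcal{L}$. In either case $I_j$ must contain two of the four endpoints of $D_0$ and $D_1$, and the admissible pairs depend on whether agreement or reversal holds on $I_j$: agreeing components reach a top of one $D_i$ and a bottom of some $D_j$, while reversing components reach either two tops, two bottoms, or exhaust an entire $D_i$.

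Finally I would close with a combinatorial case analysis. Since the $I_j$ are pairwise disjoint they occupy disjoint pairs of endpoints among the four available, which forces $k \leq 2$ apart from the degenerate situations $I = D_0$ or $I = D_1$ that fold into case (2). A single agreeing component then yields case (2), a single reversing one yields case (3), two agreeing components yield case (4), and two reversing components yield case (5). The main obstacle I anticipate is ruling out a mixed configuration with one agreeing and one reversing component: this should follow from the observation that after placing a reversing component and removing the two endpoints it consumes, the remaining pair of endpoints is never of the form required for an agreeing component, so no such mixed configuration arises.
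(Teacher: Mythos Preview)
Your strategy is essentially the paper's: extract common intervals on which the two orders agree or reverse, apply Lemma~\ref{lem: no forking} (and its dual under reversal) to force each such interval to reach an endpoint of $D_0$ or $D_1$, and then count endpoints. The paper's execution is shorter because it never invokes weak o-minimality: it simply picks one maximal common interval $J$ with uniform behavior, observes via Lemma~\ref{lem: no forking} that $J$ occupies one end of each $D_i$, and then notes that any remaining maximal piece $J'$ is forced to the two remaining ends, leaving no room for a third. Your upfront decomposition into $I_1,\dots,I_k$ and the subsequent endpoint bookkeeping arrive at the same place with more machinery.

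One step in your argument needs more care. You assert that weak o-minimality of the image of $I$ in $V_0$, together with coincidence of the $E_0$- and $E_1$-classes, already yields pieces that are intervals of \emph{both} $D_0$ and $D_1$ with globally uniform agree/reverse behavior. This does not follow as stated: the maximal $\leq_0$-convex components of $I$ need not be $\leq_1$-convex, and the local agree/reverse dichotomy from Lemma~\ref{lem: uniqueness of germs} need not be constant on them without further argument. The fix is either to observe that the agree and reverse loci are themselves definable over $\bar a_0\hat{~}\bar a_1$ and refine using Proposition~\ref{prop: lines are dlo}, or---more simply---to bypass weak o-minimality entirely and take the $I_j$ to be maximal common intervals with uniform behavior, as the paper does; finiteness then comes for free from your own endpoint count. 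Your explicit ruling-out of the mixed agreeing/reversing configuration is correct and is a point the paper's proof leaves implicit.
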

\begin{proof}
	Assume that $I$ is non-empty. By Lemma \ref {lem: uniqueness of germs}, there is $J\subseteq I$ which is an open interval of both $D_0$ and $D_1$, and the two orders on $J$ on equal or opposite of each other. Take $J$ maximal such. Replacing $D_1$ by the same set with the opposite order, we can assume that the two orders agree on $J$.  By Lemma \ref{lem: no forking}, for some $i<2$, $J$ is an initial segment of $D_i$ and a final segment of $D_{1-i}$. If $I=J$, we are done, otherwise let $J'$ be a maximal interval in $I\setminus J$. Then by Lemma \ref{lem: no forking} again, $J'$ must be an initial segment of $D_{1-i}$ and  a final segment of $D_i$. There is no possibility left for another maximal interval of $I$, hence $I=J \cup J'$. This finishes the proof.
\end{proof}



If $D_0,D_1 \in \mathcal L$, write $D_0 \unlhd D_1$ if a final segment of $D_0$ coincides with an initial segment of $D_1$, with the same induced orders. Say that a point $c\in M$ is of order type if there is some $D \in \mathcal L$ containing $c$. By Lemma \ref{lem: uniqueness of germs}, if $c$ is of order type, then any two $D,D'\in \mathcal L$ containing $c$ coincide, up to reversal, on a neighborhood of $c$. Let $\Omega \subseteq M$ be the set of points of order type. Define an equivalence relation $E_0$ on $\Omega$ as follows: $a$ and $b$ are $E_0$-equivalent if there is a sequence $D_1,\ldots,D_n$ of elements of $\mathcal L$ such that $D_i \unlhd D_{i+1}$, $a\in D_1$ and $b\in D_n$.

Let $\mathcal E$ be an $E_0$-equivalence class. Call $\mathcal E$ circular if there is a finite sequence $D_0 \unlhd D_1 \unlhd \cdots \unlhd D_n \unlhd D_0$, otherwise call $\mathcal E$ linear. Assume that $\mathcal E$ is circular as witnessed by $D_0,\ldots, D_n$. The sequence $(D_0,\ldots,D_n, D_0)$ induces a circular order $C$ on $\mathcal E$ which coincides locally with each order $D_i$. If $D\in \mathcal L$ intersects $\mathcal E$, then by Corollary \ref{cor: no forking}, $D$ is an interval of $(\mathcal E,C)$  and the order on $D$ coincides up to reversal with the one induced by $C$. It follows that any other sequence $D'_0,\ldots,D'_{n'}$ that witnesses circularity of $\mathcal E$ defines on $\mathcal E$ either the circular order $C$, or its reverse. Hence there is on $\mathcal E$ a separation relation $S$ invariant over any automorphism that preserves $\mathcal E$ setwise. In other words, $S$ is definable over a canonical parameter for the class $\mathcal E$.

If now the class $\mathcal E$ is linear, then a similar argument shows that there is a betweenness relation $B$ on $\mathcal E$ definable over a canonical base for $\mathcal E$ such that for any $D\in \mathcal L$ included in $\mathcal E$, the restriction of $B$ to $D$ is compatible with the order on $D$.

Let $e$ be a canonical parameter for $\mathcal E$. Then either a separation relation or a betweenness relation on $\mathcal E$ is definable over $e$. If $D\in \mathcal L$ is definable and transitive over $\bar a$, then $e\in \dcl^{eq}(\bar a)$. In particular, any two elements of $D$ have the same type over $e$. By definition of $E_0$, it follows that any $a,b\in \mathcal E$ have the same type over $e$, so $\mathcal E$ is transitive over $e$. If $A\subseteq M$ is any set of parameters disjoint from the class $\mathcal E$ and let $e$ be an imaginary code for $\mathcal E$, then by Corollary \ref{cor: no structure on orders} the structure induced by $Ae$ on $\mathcal E$ is precisely one of the four unstable reducts of DLO.

\section{Conclusions}

\begin{thm}\label{th: main decomposition}
	Assume that $M$ has few substructures. Then there is a $\emptyset$-definable equivalence relation $F$ with finite classes on $M$, a $\emptyset$-definable equivalence relation $E$ on the quotient $M_0:=M/F$ such that the following holds: Let $N:=M_0/E$ and let $M_*$ denote the reduct of $M_0$ to pullbacks of definable sets on $N$, then $M_*$ is stable and $f_n(M_*)=f_n(M_0)$ for all $n$.
\end{thm}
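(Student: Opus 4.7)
I take $F$ on $M$ to be equialgebraicity: $aFb \iff \acl(a)=\acl(b)$. This is $\aut(M)$-invariant, hence $\emptyset$-definable by $\omega$-categoricity, and has finite classes. On $M_0 := M/F$, I use the $\emptyset$-definable set $\Omega$ of points of order type and the $\emptyset$-definable equivalence relation $E_0$ from Section 4 to set $[a]E[b] \iff [a]=[b]$ or ($a,b \in \Omega$ and $aE_0b$); since each $E_0$-class is a union of $F$-classes by Lemma \ref{lem: equialgebraic fibers}, $E$ is a well-defined $\emptyset$-definable equivalence on $M_0$, whose classes are either singletons (images of points from $M \setminus \Omega$) or of the form $\mathcal E/F$ for an $E_0$-class $\mathcal E$.

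\textbf{Stability of $N$.} Since $M_*$ is the reduct of $M_0$ to pullbacks of definable sets from $N = M_0/E$, $M_*$ is stable iff $N$ is. The case of stable $M$ is easy: by Fact \ref{fact: producing a linear order}, $\Omega = \emptyset$, so $E$ is equality on $M_0$ and $N = M_0$ is a stable quotient of $M$. Assume $M$ is unstable and suppose for contradiction that $N$ is also unstable. $N$ is $\omega$-categorical and NIP (NIP passes to interpretable quotients, and $M$ is NIP by Corollary \ref{cor: NIP}), so Fact \ref{fact: producing a linear order} applied to $N$ gives an interpretable $\pi': N \to V'$ with $V'$ infinite, transitive and linearly ordered over some finite $\bar b$. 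Composing with $\pi_N: M \to N$ yields $\pi_M: M \to V'$, which has finite fibers by Lemma \ref{lem: finite fibers} (applied in $M_{\bar b}$ via Lemma \ref{lem: naming parameters}). But picking any $E_0$-class $\mathcal E \subseteq \Omega$, its image is a single point of $N$, mapped to some $v \in V'$ whose $\pi_M$-preimage contains the infinite set $\mathcal E$. Contradiction.

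\textbf{Equality $f_n(M_*) = f_n(M_0)$.} The inequality $f_n(M_*) \leq f_n(M_0)$ is automatic. For the reverse, I take $A, A' \subseteq M_0$ of size $n$ in the same $\aut(M_*)$-orbit and produce $\tau \in \aut(M_0)$ with $\tau(A) = A'$. Their $E$-class decompositions correspond under some $\aut(N)$-automorphism, which lifts by $\omega$-categoricity to $\tau_0 \in \aut(M_0)$ sending each $\mathcal C_i$ (the $E$-class of a point in $A$) to its counterpart $\mathcal C'_i$ setwise. It remains, for each $i$, to move $\tau_0(A) \cap \mathcal C'_i$ to $A' \cap \mathcal C'_i$ while fixing the other $\mathcal C'_j$ setwise. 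Singleton classes are trivial; for $\mathcal C'_i = \mathcal E'_i/F$, the main input is Corollary \ref{cor: no structure on orders}: taking $e'_i$ a canonical parameter for $\mathcal E'_i$ and $\bar b$ coding the other $\mathcal C'_j$, the structure on $\mathcal E'_i$ over $\bar b, e'_i$ is a reduct of DLO, in which $f_n = 1$. Since $F$ is automorphism-invariant, the resulting $\aut(M)$-automorphism preserves $F$-classes and descends to $\aut(M_0)$ with the desired effect. Composing such moves finishes the proof.

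\textbf{Main obstacle.} The stability of $N$ is conceptually the delicate step: one must verify that collapsing every $E_0$-class kills every potential source of definable order in $N$. The argument rests on the interplay between Lemma \ref{lem: finite fibers}, which forces any ordered interpretable set in $M$ to have finite fibers, and the fact that each $E_0$-class collapses to a single point of $N$, which is incompatible with those fibers being finite. The second claim is more combinatorial but hinges on the strong rigidity of reducts of DLO supplied by Corollary \ref{cor: no structure on orders}, which lets one transport subsets within each $E$-class independently.
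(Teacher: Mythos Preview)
Your approach is the paper's, and the two main steps are correctly identified. There are, however, two places where the argument as written does not quite go through.

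\emph{Stability of $N$.} Fact \ref{fact: producing a linear order} does not assert that $V'$ is transitive. If you restrict to a transitive infinite $V''\subseteq V'$, the domain of $\pi'$ shrinks to some $D'\subsetneq N$, and the composite $\pi_M$ is only defined on the preimage $D_M\subseteq M$ of $D'$. Your sentence ``picking any $E_0$-class $\mathcal E\subseteq\Omega$'' then no longer works: nothing forces such an $\mathcal E$ to lie inside $D_M$. The case you are missing is when every point of $D'$ has singleton $E$-class. In that case $D_M$ identifies with $D'$; passing to a transitive piece, $D_M$ is almost linear over the parameters and hence lies in $\mathcal L$, so its points are of order type in $M_0$. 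But points with singleton $E$-class are by construction outside $\Omega$: contradiction. This dichotomy (infinite $E$-fiber contradicts Lemma \ref{lem: finite fibers}; singleton $E$-fiber forces a new order-type point) is precisely what the paper spells out.

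\emph{The equality $f_n(M_*)=f_n(M_0)$.} Fixing the other classes $\mathcal C'_j$ only \emph{setwise} is not enough: the automorphism you build to adjust $\mathcal C'_i$ may permute the elements of $A'\cap\mathcal C'_j$ already placed, undoing earlier steps. You need an inductive argument in which, at stage $i$, the parameters over which $\mathcal C'_i$ is shown to be a reduct of DLO include the actual elements of $A'$ lying in $A'_*$ and in the previously treated classes, not merely codes for those classes. Corollary \ref{cor: no structure on orders} allows exactly this, since $\bar b$ there can be any tuple of real elements disjoint from $D$; take $\bar b$ to enumerate $A'_*\cup\bigcup_{j<i}(A'\cap\mathcal C'_j)$. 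This is how the paper proceeds.

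A side remark: your $F$ (equialgebraicity on all of $M$) differs from the paper's choice (equialgebraicity on $\Omega$, equality elsewhere), but either choice satisfies the statement of the theorem.
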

\begin{proof}
	Let $F$ be the equivalence relation of equialgebraicity on points of order type, extended by equality on the rest of the structure. Then $F$ has finite fibers. Let $M_0$ be the quotient $M/F$. Seeing $M_0$ as a structure in its own right (equipped with the induced structure from $M$), we have $f_n(M_0)\leq f_n(M)$ for each $n$: since $F$ is $\emptyset$-definable, the quotient by $F$ induces a well-defined map from finite substructures of $M$ to finite substructures of $M_0$, such that any substructure of $M_0$ of size $n$ lifts to at least one substructure of $M$ of size $n$. It follows that $M_0$ has few substructures and from now on, we work in $M_0$.
		
	 Let $E$ be the equivalence relation $E_0$ defined above on points of order type, extended by equality outside and set $N=M_0/E$. Then $N$ with the induced structure is stable: it is NIP, since $M$ is, and if it were unstable, then by either Fact \ref{fact: producing a linear order} or Theorem \ref{th: producing a linear order}, there would be an interpretable map $\pi:N \to V$ to a linear order, hence a point $a\in N$ order type in the sense of $N$. If the fiber of $a$ under $\pi$ is infinite, this contradicts Lemma \ref{lem: finite fibers}. Hence the fiber above $a$ has one element, which we again call $a$, but then $a$ is of order type in $M_0$, contradicting the construction of $E$.
	 
	 Define $M_*$ as in the statement of the theorem: that is $M_*$ is the reduct of $M$ obtained by naming all pullbacks of definable subsets of $N^k$ (so in particular $E$ is named as the pullback of equality on $N^2$). Let $L_*$ be the corresponding language. Note that the points in $N$ whose $E$-class is infinite is definable. Hence $M_*$ is interpretable in $N$: it is obtained from $N$ by blowing up each such point to an infinite set. Therefore $M_*$ is stable.
	 
	 It remains to show that $f_n(M_*)=f_n(M_0)$ for all $n$. For this, we show that a finite substructure of $M_*$ has a unique expansion to a finite substructure of $M_0$, up to isomorphism. To this end, let $A,B\subseteq M_0$ be two finite substructures which have isomorphic $L_*$-reducts and we have to show that $A$ and $B$ are isomorphic. Let $\sigma:A\to B$ be a bijection which induces an ismorphism on the $L_*$-reducts. Write $A$ as a disjoint union $A=A_* \cup A_0 \cup \cdots \cup A_{d-1}$, where $A_*$ is composed of the points in $A$ which are not of order-type (in the sense of $M_0$) and each $A_i$, $i<d$, is composed of all the points in $A$ in some infinite $E$-class $e_i\in N$. Let similarly $B=B_*\cup B_0 \cup \cdots \cup B_{d-1}$, where $B_i = \sigma(A_i)$. Since $\sigma$ is $L_*$-elementary, it induces an isomorphism $\sigma_N$ in $N$ between the projections of $A$ and $B$ to $N$.
	 
	 We build inductively an isomorphism $\tilde \sigma$ between $A$ and $B$ (along with their images in $N$). Start by setting $\tilde \sigma$ to be equal to $\sigma_N$ on $\pi(A)$. In particular, this determines $\tilde \sigma$ on $A_*$ and on $\bar e$. Assume that $\tilde \sigma$ has been defined on $A_*\bar e A_{<l}$. Let $E_l=\pi^{-1}(e_l)$. The definable set $E_l$ is transitive over  $A_*\bar e A_{<l}$ since no of $A_*\bar e A_{<l}$ is in $E_l$. Therefore its structure is one of the four unstable reducts of DLO and as such, any two substructures of $E_l$ of the same size are isomorphic over $A_*\bar e A_{<l}$. Since $|A_l|=|B_l|$, we can extend $\tilde \sigma$ to send $A_l$ to $B_l$. This finishes the construction and the proof of the theorem.
\end{proof}

\begin{thm}\label{th: main conclusion}
	Assume that $M$ has few substructures and is primitive. Then either $M$ is strictly stable (stable, but not $\omega$-stable), or $M$ is one of the five reducts of DLO.
\end{thm}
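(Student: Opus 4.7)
The plan is to apply Theorem~\ref{th: main decomposition} to $M$ and exploit primitivity to collapse the two $\emptyset$-definable equivalence relations $F$ and $E$ it produces. Since $F$ has finite classes on the infinite primitive structure $M$, it must be equality, so $M_0=M$. Similarly $E$, being $\emptyset$-definable on the primitive structure $M$, is either equality or the universal relation, which gives the two cases I want to split on.

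If $E$ is equality, then $N=M$ and so $M_*=M$; hence the decomposition theorem tells us $M$ itself is stable. Either $M$ is strictly stable, in which case we are done, or $M$ is $\omega$-stable. In the latter subcase, Fact~\ref{fact: omega stable case} applied to the primitive $\omega$-categorical $\omega$-stable structure $M$ yields that either $M$ is a pure infinite set (one of the five reducts of DLO, as desired) or $f_n\geq \lfloor n/5\rfloor!$ eventually; the latter grows faster than $2^n/p(n)$ for every polynomial $p$ and so contradicts the assumption of few substructures.

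If $E$ is the universal relation, then since $E$ restricts to equality on points that are not of order type and does not link points of different types, $M$ can contain at most one point that is not of order type. As $M$ is infinite, every point of $M$ is of order type, and $M$ itself is a single $E_0$-class $\mathcal{E}$. Because $\mathcal{E}=M$ is $\emptyset$-definable, every automorphism of $M$ fixes it setwise, so any canonical parameter $e$ for $\mathcal{E}$ lies in $\dcl^{eq}(\emptyset)$. The separation or betweenness relation constructed on $\mathcal{E}$ in the discussion preceding Theorem~\ref{th: main decomposition} is therefore already $\emptyset$-definable on $M$, and Corollary~\ref{cor: no structure on orders} applied with $\bar b=\emptyset$ pins the induced structure on $M$ down to one of the four unstable reducts of DLO.

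I do not expect a serious obstacle: the decomposition theorem does most of the real work, and Macpherson's $\omega$-stable dichotomy takes care of the remaining $\omega$-stable subcase. The only delicate point is verifying in the second case that the separation/betweenness relation on $\mathcal{E}=M$ is $\emptyset$-definable so that Corollary~\ref{cor: no structure on orders} can be invoked without extra parameters, and this follows at once from $\mathcal{E}=M$ being $\emptyset$-definable and hence fixed setwise by every automorphism.
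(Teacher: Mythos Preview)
Your proof is correct and follows essentially the same route as the paper: use primitivity to force $F$ to be equality and $E$ to be either equality or universal, then invoke the end of Section~4 (via Corollary~\ref{cor: no structure on orders}) in the universal case and Fact~\ref{fact: omega stable case} in the stable case. You spell out a few details the paper leaves implicit---why no non-order-type points survive when $E$ is universal, why the canonical parameter for $\mathcal E=M$ lies in $\dcl^{eq}(\emptyset)$, and why $\lfloor n/5\rfloor!$ beats $2^n/p(n)$---but the argument is the same.
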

\begin{proof}
	If $M$ is primitive, then the equivalence relation $F$ above is equality and $E$ is either trivial or equality. If it is trivial, then $M$ is composed of one $E$-class and hence is one of the four unstable reducts of DLO (up to bi-definability). If $E$ is equality, then $M$ is stable. By Fact \ref{fact: omega stable case}, if $M$ is $\omega$-stable, then it is pure equality. Otherwise, $M$ is strictly stable.
\end{proof}

\begin{corollary}
	Assume that $M$ is finitely homogeneous, primitive and has few substructures, then $M$ is one of the five reducts of DLO.
\end{corollary}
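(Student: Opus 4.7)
My plan is to deduce this corollary as an essentially immediate consequence of Theorem \ref{th: main conclusion} combined with a classical structural result about finitely homogeneous stable structures.

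First, I would apply Theorem \ref{th: main conclusion} directly: since $M$ is primitive and has few substructures, we are in one of exactly two cases, namely $M$ is strictly stable, or $M$ is bi-definable with one of the five reducts of DLO. The goal then reduces to ruling out the strictly stable case under the additional assumption that $M$ is finitely homogeneous.

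For this ruling-out step, I would invoke the fact---already flagged by the author in the discussion preceding Corollary~1.4---that a structure homogeneous in a finite relational language cannot be strictly stable. The reason is Lachlan's theorem on stable homogeneous structures in a finite relational language: any such structure is totally categorical (in fact has finite Morley rank), and in particular $\omega$-stable. Hence stability together with finite homogeneity forces $\omega$-stability, contradicting the strictly stable case. I would simply cite this as a known fact rather than reprove it, in the same spirit as the use of Fact~\ref{fact: omega stable case} elsewhere in the paper.

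Putting the two ingredients together: if $M$ were strictly stable, it would be stable and finitely homogeneous, hence $\omega$-stable by Lachlan, contradicting the definition of strictly stable. Therefore the first alternative of Theorem~\ref{th: main conclusion} is excluded and $M$ must be one of the five reducts of DLO, as required.

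The only potential obstacle is providing the correct citation for the Lachlan-type theorem used to rule out strict stability; once that is in place, the argument is a one-line deduction. No additional calculation or combinatorial argument is needed beyond what has already been established in the preceding sections.
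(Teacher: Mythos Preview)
Your proposal is correct and follows essentially the same route as the paper: apply Theorem~\ref{th: main conclusion} and then rule out the strictly stable case by invoking the fact that every finitely homogeneous stable structure is $\omega$-stable. The paper's proof is the one-line version of exactly this argument.
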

\begin{proof}
	Since every finitely homogeneous stable structure is $\omega$-stable, this follows from the previous theorem.
\end{proof}

For an integer $n$, let $t_n$ denote the number of rooted binary trees with $n$ terminal nodes (leaves). It is known (see \cite[p.55]{comtet}, or \cite[p.77]{Cameron:oligomorphic}) that $\lim (t_n)^{1/n} \approx 2.483$. Let $c$ be the square root of this number, $c\approx 1.576$.

\begin{corollary}
	Assume that $M$ is primitive and $f_n(M)\leq c^n$, for $c\approx 1.576$ as defined above, then $M$ is one of the five reducts of DLO.
\end{corollary}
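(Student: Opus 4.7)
The plan is to combine Theorem \ref{th: main conclusion} with the refined value of $c$ from Corollary \ref{cor:main better c} and then rule out the remaining stable branches by Macpherson's dichotomy in the $\omega$-stable case.

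First I would verify that the hypothesis $f_n(M) \leq c^n$ forces $M$ to have few substructures. Since $c \approx 1.576 < 2$, for any polynomial $p(x)$ the ratio $c^n \cdot p(n)/2^n$ tends to $0$, so $f_n < 2^n/p(n)$ eventually; hence $M$ has few substructures in the sense of the paper's definition, and Theorem \ref{th: main conclusion} applies. That theorem, combined with the assumption that $M$ is primitive, tells us that $M$ is either strictly stable or bi-definable with one of the five reducts of DLO. So it suffices to eliminate the strictly stable case.

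Next I would invoke Corollary \ref{cor: small growth stable} applied with the improved constant $c\approx 1.576$ from Corollary \ref{cor:main better c} (so that Theorem \ref{th:macpherson} applies with this $c$): if $f_n(M) \leq c^n$, then $M$ is either $\omega$-stable or unstable. The two cases can now be handled separately. If $M$ is unstable, then in particular it is not strictly stable, so by Theorem \ref{th: main conclusion} it must already be one of the five reducts of DLO and we are done. If instead $M$ is $\omega$-stable and primitive, Fact \ref{fact: omega stable case} forces $M$ to be a pure infinite set (which is one of the five reducts) because the alternative $f_n \geq \lfloor n/5\rfloor!$ grows super-exponentially and quickly exceeds $c^n$.

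Putting the two cases together gives the corollary. I do not expect a significant obstacle: the whole point is that $c = \sqrt{t}$ is chosen precisely to be the threshold below which the combination of Merola's bound and the new elimination of the unstable primitive case collapses every remaining option onto the five reducts of DLO. The only small thing to check is the bookkeeping that $1.576 < 2$ (so \emph{few substructures} holds) and that $\lfloor n/5\rfloor!$ beats $c^n$ (immediate by Stirling), both of which are routine.
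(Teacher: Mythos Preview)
Your proposal is correct and takes essentially the same approach as the paper: both reduce to the strictly stable case via Theorem~\ref{th: main conclusion} and then eliminate it using Merola's case analysis with the tournament case removed (you access this via Corollary~\ref{cor:main better c} and Corollary~\ref{cor: small growth stable}, whereas the paper re-derives it inline by noting that the 2-homogeneous, not 2-transitive case forces a definable tournament, hence the order property, which is impossible in a stable structure). Your separate treatment of the $\omega$-stable and unstable branches is slightly redundant---once $M$ is known to be strictly stable, both alternatives of Corollary~\ref{cor: small growth stable} are immediately ruled out---but this does no harm.
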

\begin{proof}
	This follows from Theorem \ref{th: main conclusion}, using the results of \cite{merola}, and in particular the discussion at the very end of the paper. It is explained there that Theorem \ref{th:main} holds with $c$ roughly equal to $1.324$. This value is obtained in the case where the automorphism group is 2-homogeneous, not 2-transitive. In this case, there is a $\emptyset$-definable tournament (this is presented in Section 4 there). This cannot happen in a stable structure, since a tournament has the order property. We can therefore eliminate this case from the analysis of \cite{merola}. The next smallest constant comes from the case where the group is not 2-homogeneous. The value for that case was obtained by Macpherson \cite{Mac:orbits} and is equal to $c$ as defined above.
\end{proof}

\begin{corollary}\label{cor: stable reduct}
	Assume that for no polynomial $p(x)$ do we have $f_n(M)\geq \phi^n/p(n)$, where $\phi\approx 1.618$ is the golden ratio, then $M$ admits a stable reduct $M_*$ with $f_n(M_*)=f_n(M)$ for all $n$.
\end{corollary}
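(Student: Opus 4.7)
The plan is to apply Theorem \ref{th: main decomposition} and then argue that the stronger hypothesis forces the equivalence relation $F$ produced there to be equality. Since $\phi<2$, the hypothesis implies in particular that $M$ has few substructures, so the theorem yields $F$, $M_0=M/F$, and a stable reduct $M_*$ of $M_0$ with $f_n(M_*)=f_n(M_0)$ for all $n$. If $F$ is equality then $M_0$ coincides with $M$ on the nose, so $M_*$ is a stable reduct of $M$ itself and $f_n(M_*)=f_n(M)$, as desired. Thus the whole goal reduces to showing that, under the $\phi$-hypothesis, $F$ must be trivial.

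To this end, I would suppose toward a contradiction that some order-type point $a$ has equialgebraic class of size $k\geq 2$, and consider the $E_0$-class $\mathcal E$ of $a$ with canonical parameter $e$. By Corollary \ref{cor: no structure on orders}, the structure on $\mathcal E$ over $e$ is one of the four unstable reducts of DLO on $V:=\mathcal E/F$, and by transitivity of $\mathcal E$ over $e$, every $F$-class inside $\mathcal E$ has size exactly $k$. The next step is a counting argument: first observe that an $\aut(M)$-orbit on $n$-subsets of $\mathcal E$ coincides with an $\aut(M/e)$-orbit, since any automorphism of $M$ carrying one subset of $\mathcal E$ to another must preserve $\mathcal E$ setwise and hence fix $e$. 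Such an orbit is then classified by the \emph{profile} $(s_1,\ldots,s_m)$ recording $s_i=|A\cap \pi^{-1}(v_i)|\in\{1,\ldots,k\}$ as $v_i$ enumerates $\pi(A)\subseteq V$ in the natural order, modulo the rotation/reversal symmetries afforded by the order type. Since $k\geq 2$, the number of profiles dominates the number of compositions of $n$ into parts of size $1$ and $2$, i.e.\ the Fibonacci number $F_{n+1}\sim \phi^n/\sqrt{5}$; the symmetry group has size at most $2n$, so $f_n(M)\geq F_{n+1}/(2n) \geq \phi^n/q(n)$ for some polynomial $q$, contradicting the hypothesis.

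The main delicate point is justifying that distinct profiles really yield distinct $\aut(M/e)$-orbits. This rests on Corollary \ref{cor: no structure on orders}: over $e$, there is no extra structure on $V$ beyond the order type, so profile sequences modulo order symmetries really are a complete invariant. Treating the four possible order types uniformly (linear, circular, betweenness, separation) is routine bookkeeping, since in each case the symmetry group on profiles has polynomial size in $n$ and does not destroy the $\phi^n$ lower bound.
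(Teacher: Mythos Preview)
Your proof is correct and follows essentially the same strategy as the paper: reduce to showing that $F$ is equality, then contradict the $\phi$-hypothesis via a Fibonacci composition count in an ordered quotient with fibers of size at least $2$.

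The paper's implementation is somewhat more streamlined. Rather than working with the full $E_0$-class $\mathcal E$ over its imaginary canonical parameter $e$, the paper simply takes an almost-linear $D_{\bar a}$ witnessing that $F$ is nontrivial and uses (the proof of) Lemma~\ref{lem: naming parameters} to absorb $\bar a$ into the language at the cost of a polynomial factor. This yields a $\emptyset$-definable \emph{linear} order on $V$, so compositions of $n$ into parts from $\{1,2\}$ are recovered exactly from the isomorphism type of $A_\sigma$, with no rotation/reversal symmetries to divide out. Your route through $\mathcal E$ works too, but forces you to track the four possible order types and quotient by a symmetry group of size $\leq 2n$.

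One small remark: your appeal to Corollary~\ref{cor: no structure on orders} for the claim that the profile is a \emph{complete} invariant is not actually needed. For the lower bound you only need the profile (modulo symmetry) to be \emph{an} invariant of the $\aut(M/e)$-orbit, and that follows immediately from $F$ being $\emptyset$-definable and the separation/betweenness relation on $\mathcal E$ being $e$-definable (which is established in the discussion preceding Theorem~\ref{th: main decomposition}, not directly by Corollary~\ref{cor: no structure on orders} as stated).
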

\begin{proof}
	Assume that $M$ is such that $f_n=o(\phi^n/p(n))$ for any polynomial $p$. Define $F$ as in the proof of Theorem \ref{th: main decomposition} and assume that $F$ is not equality. Then over some parameters, there is a definable subset $D\subseteq M$ and an interpretable map $\pi:D\to V$ with fibers of size $\geq 2$ and a definable infinite linear order on $V$. Using Lemma \ref{lem: naming parameters}, and losing a polynomial factor, we can assume all this is defined over $\emptyset$. Let $n<\omega$ and $\sigma=(a_1,\ldots,a_l)$ be a sequence of elements of $\{1,2\}$ which sum to $n$. Associate to $\sigma$ a substructure $A_\sigma\subseteq D$ of size $n$ such that $\pi(A_\sigma)$ has size $l$, equal to $e_1<\cdots <e_l$, say, and $A_\sigma \cap \pi^{-1}(e_i)$ has size $a_i$ for all $i\leq l$. Then one can recover $\sigma$ from the isomorphism type of $A_\sigma$. An easy induction shows that the number of possibilities for $\sigma$, for a given $n$, is equal to $F_n$: the $n$-th Fibonacci number. Hence $f_n\geq F_n \sim \phi^n/\sqrt 5$ and the result follows.
\end{proof}

In particular, the study of possible sub-exponential growth rates of the function $f_n$ reduces to the $\omega$-stable case.

\begin{corollary}
	If $f_n(M)$ grows slower than any exponential, then there is an $\omega$-stable structure $M_*$ with $f_n(M_*)=f_n(M)$ for all $n$.
\end{corollary}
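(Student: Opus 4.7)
The plan is to chain together the two results already proved in this section. Specifically, I would combine Corollary \ref{cor: stable reduct} (which produces a stable reduct with the same orbit-counting function) with Corollary \ref{cor: small growth stable} (which upgrades stability to $\omega$-stability under a sufficiently slow growth hypothesis).

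First, observe that if $f_n(M)$ grows slower than any exponential, then in particular $f_n(M) = o(\phi^n/p(n))$ for every polynomial $p$, so the hypothesis of Corollary \ref{cor: stable reduct} applies. This yields a stable reduct $M_*$ of $M$ with $f_n(M_*) = f_n(M)$ for every $n$. Since the counting function is preserved, $f_n(M_*)$ is itself sub-exponential.

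Next, let $c > 1$ be the constant from Theorem \ref{th:macpherson} (the one witnessing Macpherson's lower bound). Sub-exponential growth of $f_n(M_*)$ means $f_n(M_*) = o(c^n/n^k)$ for every $k$, so in particular $f_n(M_*) \leq c^n/p(n)$ for any fixed polynomial $p$ and all sufficiently large $n$ (the finitely many exceptional values do not affect the argument in Corollary \ref{cor: small growth stable}, which is asymptotic in the induction on $2$-types). Therefore Corollary \ref{cor: small growth stable} applies to $M_*$, forcing $M_*$ to be either $\omega$-stable or unstable. Since we already know $M_*$ is stable, it must in fact be $\omega$-stable, as desired.

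The argument is essentially a two-step reduction with no real obstacle: the first step strips away the unstable part using the order-theoretic machinery developed throughout the paper, and the second step is a direct application of the structure theory for $\omega$-categorical stable structures of slow growth encapsulated in Corollary \ref{cor: small growth stable}. The only point requiring a moment of care is verifying that sub-exponential growth is strong enough to trigger both corollaries with the same reduct; this is immediate since the hypothesis of Corollary \ref{cor: small growth stable} is weaker than sub-exponentiality once the constant $c$ from Theorem \ref{th:macpherson} is fixed.
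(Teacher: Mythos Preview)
Your proposal is correct and follows exactly the same two-step route as the paper: first invoke Corollary~\ref{cor: stable reduct} to obtain a stable reduct $M_*$ with the same $f_n$, then apply Corollary~\ref{cor: small growth stable} to conclude that $M_*$ is $\omega$-stable (ruling out the ``unstable'' alternative because $M_*$ is already stable). The paper's proof is terser and phrases the growth hypothesis in terms of the improved constant $c\approx 1.576$ rather than the original Macpherson constant, but since sub-exponential growth beats any exponential this makes no difference.
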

\begin{proof}
	Let $c$ be as defined above. If $f_n(M)=o(c^n/p(n))$, for any polynomial $p(x)$, then by Corollary \ref{cor: stable reduct}, $M$ has a stable reduct $M_*$ with $f_n(M_*)=f_n(M)$ for all $n$. By Corollary \ref{cor: small growth stable}, $M_*$ is $\omega$-stable.
\end{proof}

\section{Producing a linear order}\label{sec: linear}

\subsection{Orders with bounded antichains}

We say that a partial order $(P,\leq)$ has bounded antichains if there is an integer $n$ such that any antichain of $P$ has size at most $n$. In the following theorem, we do not assume that $M$ is $\omega$-categorical.

\begin{thm}\label{th: bounded antichains}
Let $M$ be any structure and $(P,\leq)$ be an infinite definable partial order with bounded antichains, then, over some parameters, there is a definable equivalence relation $E$ on $P$ with infinitely many classes and a definable linear order on the quotient $P/E$.
\end{thm}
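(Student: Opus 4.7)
My plan is to proceed by induction on $n$, the bound on antichain sizes in $P$. The base case $n=1$ is trivial: $P$ is itself linearly ordered, so we may take $E$ to be equality.

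For the inductive step, fix a maximum antichain $A=\{a_1,\dots,a_n\}\subseteq P$. For each $x\in P\setminus A$, record its type $(\tau^+(x),\tau^-(x))$ where $\tau^+(x)=\{i:a_i<x\}$ and $\tau^-(x)=\{i:x<a_i\}$. The antichain property of $A$ forces that $\tau^+(x)$ and $\tau^-(x)$ cannot both be nonempty (else $a_i<x<a_j$ gives $a_i<a_j$); the maximality of $A$ forces their union to be nonempty (else $A\cup\{x\}$ is a larger antichain). So $P\setminus A$ is partitioned, $A$-definably, into finitely many classes $C_{(\tau^+,\tau^-)}$, at least one of which must be infinite.

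The key observation is that a class $C$ with $s:=|\tau^+\cup\tau^-|<n$ has antichain bound at most $s$: given an antichain $\{y_1,\dots,y_k\}\subseteq C$, the set $\{y_1,\dots,y_k\}\cup\{a_i:i\notin\tau^+\cup\tau^-\}$ is again an antichain in $P$, of size $k+(n-s)$, forcing $k\leq s$. If some infinite class $C$ has $s<n$, apply the inductive hypothesis to $C$ (a parameter-definable poset with strictly smaller antichain bound) to obtain an equivalence $E_0$ on $C$ with infinitely many classes and a definable linear order on $C/E_0$. Extend this to $P$ by collapsing the complement $P\setminus C$ into a single extra equivalence class, placed at the bottom of the linear order on $P/E$.

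The remaining hard case is when the only infinite classes are the upper cone $U=\{x:x>A\}$ and the lower cone $L=\{x:x<A\}$; by symmetry, assume $U$ is infinite. If the width of $U$ is strictly less than $n$, induction applied to $U$ closes the argument. The truly delicate situation is when $U$ has full width $n$: here the plan is to iterate by choosing a maximum antichain $A^{(1)}\subseteq U$ of size $n$ (automatically a maximum antichain of $P$), and to restart the analysis inside $U$ relative to $A^{(1)}$. The principal obstacle is that this iteration might not terminate, producing a strictly decreasing sequence $U\supsetneq U^{(1)}\supsetneq\cdots$ of infinite full-width upper cones. Handling this case requires either ruling out non-termination via a compactness argument on the poset, or exploiting the rigid hierarchical structure that this configuration imposes on $P$ in order to directly construct a definable rank function from finitely many carefully chosen parameters (for example from the minimal elements of $U$ when they exist, which form an antichain of size at most $n$, or from a finite segment of an indiscernible chain obtained via Ramsey).
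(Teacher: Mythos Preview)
Your argument is correct and well-organized up to the point you explicitly flag as the hard case, but that case is not handled, so the proof is genuinely incomplete. The situation where the upper cone $U$ above a maximum antichain again has full width $n$ is not a degenerate corner case: it is essentially the generic situation (think of $P$ equal to $n$ disjoint copies of $(\mathbb Q,\leq)$, or the product of $n$ such copies). In such examples your iteration $U\supsetneq U^{(1)}\supsetneq\cdots$ never terminates, there are no minimal elements of $U$ to anchor a rank function, and no compactness argument will force the width to drop after finitely many steps. Your Ramsey/indiscernible-chain suggestion is closer in spirit to what is actually needed, but you have not said what to do with the chain once you have it, and an indiscernible chain alone does not give a definable equivalence relation on all of $P$.

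The paper closes exactly this gap with a different reduction. Rather than fixing a maximum antichain and slicing $P$ by comparability type, one looks at $V(a):=\{x\in P:x\text{ incomparable with }a\}$ for each $a$. If some $V(a)$ is infinite, it has strictly smaller width and the outer induction applies (this subsumes your ``some class with $s<n$ is infinite'' case). Otherwise every $V(a)$ is finite, of size bounded by some $k$, and one runs a \emph{second} induction on $k$. The key construction is a definable enlargement of $\leq$: set $a\unlhd b$ if either $a\leq b$, or $b$ is a maximal element of $V(a)$. A short case analysis shows $\unlhd$ is transitive, hence a quasi-order; its equivalence classes consist of pairwise $\leq$-incomparable elements and are therefore finite; and in the quotient each point has at most $k-1$ points incomparable to it. Induction on $k$ then finishes. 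This definable ``promote the top of each incomparability set'' trick is the idea your outline is missing.
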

\begin{proof}
First note that it is enough to find some infinite definable $D\subseteq P$ on which the conclusion holds, since we can then extend the equivalence relation $E$ to $P$ by making $P\setminus D$ into a unique class, smaller than all elements of $D/E$.

We prove the result by induction on the size of the largest antichain. If that size is 1, then $P$ is already linear. Assume that $P$ has no antichain of size $n+1$. 
Assume that for some $a$, the set $V(a)$ of elements incomparable with $a$ is infinite. Then the poset $(V(a),\leq)$ is definable and has antichains of size at most $n$. Therefore by induction it interprets a linear order. Now assume this is not the case: for each $a$, $V(a)$ is finite, of size bounded by $k$ say. We prove by induction on $k$ that $P$ interprets a linear order.

For $a,b\in P$, we write $a\impl b$ if $b\in V(a)$ and $b$ is a maximal element in $V(a)$. We also define $a \tleq b$ if either $a\leq b$ or $a \impl b$.

\medskip
\underline{Claim}: If $a \tleq b$ and $b\tleq c$, then $a\tleq c$.

\emph{Proof}: If $a\tleq c$ does not hold, then either $c< a$, or $c\in V(a)$ is not maximal.

Case 1: $c<a$. If $a\leq b$, then also $c<b$, contradicting $b\tleq c$. So it must be that $a\impl b$. If $b\leq c$, then $b<a$ which is impossible, therefore also $b\impl c$. Then both $c,a$ are in $V(b)$ which contradicts maximality of $c$.

Case 2: $c\in V(a)$ is not maximal. Let $d\in V(a)$ such that $d>c$. Then $d$ cannot be in $V(b)$ (if $b\leq c$, then $b\leq d$ and if $b\impl c$, then $c$ must be maximal in $V(b)$). So $d$ is comparable to $b$. If $d\leq b$, then $c\leq b$ which is impossible. So $b< d$. In particular it cannot be that $a\leq b$, so $a\impl b$. But then $b$ is not maximal in $V(a)$ as $d$ is above it.

In both cases, we reached a contradiction. This completes the proof of the claim.

\medskip
It follows that $\tleq$ defines a quasi-order. Let $\equiv$ be the equivalence relation defined by $a\equiv b$ if $a\tleq b$ and $b\tleq a$. Then $a\equiv b$ implies that $a$ and $b$ are $\leq$-incomparable. Therefore the equivalence classes are finite. Furthermore, if $b$ is $\tleq$-incomparable to $a$, then it is also $\leq$-incomparable to $a$, and by construction, there is at least one $b\in V(a)$ which is $\tleq$-comparable to $a$.

To summarize, the interpretable structure $(P/\equiv , \tleq)$ is an infinite poset in which for every $a$, at most $k-1$ elements are incomparable to $a$. By induction, we can interpret an infinite linear order in it.
\end{proof}

\subsection{Linear orders in structures with few substructures}

In this section, we prove the following theorem, without reference to \cite{linear_orders}.

\begin{thm}\label{th: producing a linear order}
	Let $M$ have few substructures. Then either $M$ is stable or, over some set of parameters $A$, there is an interpretable map $\pi: M\to V$ with $V$ infinite and a definable linear order on $V$.
\end{thm}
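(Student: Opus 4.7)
The plan is to reduce the theorem to Theorem \ref{th: bounded antichains}. By Corollary \ref{cor: NIP}, $M$ is NIP; if $M$ is already stable there is nothing to prove, so assume $M$ is unstable. Then Fact \ref{fact: partial order} produces a parameter-definable quasi-order $\leq$ on $M$ with an infinite chain. Using Lemma \ref{lem: naming parameters} I absorb the defining parameters into the language and take $\leq$ to be $\emptyset$-definable. The $\emptyset$-definable equivalence relation $x\equiv y \iff (x\leq y \wedge y\leq x)$ then descends $\leq$ to an interpretable partial order $\bar\leq$ on $\bar M := M/\equiv$, and $\bar M$ is infinite since the original chain injects into it.

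The crux is to show $(\bar M,\bar\leq)$ has bounded antichains, so that Theorem \ref{th: bounded antichains} applies. Suppose not; by $\omega$-categoricity there is a single $2$-type of $\bar\leq$-incomparable pairs realized by antichains of every finite size, hence by compactness there is an infinite definable antichain $(b_i)_{i<\omega}$ in $\bar M$. Using Lemma \ref{lem: naming parameters} once more I may assume this antichain and the infinite chain $(c_j)_{j<\omega}$ are both $\emptyset$-definable. I would then try to construct, for each $n$ and each $S\subseteq \{0,\ldots,n-1\}$, a distinct substructure $A_S\subseteq M$ of size $O(n)$ by taking $n$ elements of the chain together with one antichain element per $i\in S$, positioned so that the $\bar\leq$-comparability pattern between $b_i$ and the chain encodes $i$ (for instance, using $\omega$-categorical transitivity to place $b_i$ realizing a specific cut in $(c_0,\ldots,c_{n-1})$). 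This would yield $2^n$ non-isomorphic substructures, contradicting few substructures.

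The main obstacle is the NIP constraint: the family of comparability patterns between a single element and a chain has bounded VC-dimension, so one cannot naively encode arbitrary cuts by a single element. Resolving this requires combining several antichain--chain interactions simultaneously, or else observing that when no such rich interaction exists the incomparability relation on $\bar M$ is itself sparse enough that antichains are bounded for structural reasons; in either case the bounded-antichain hypothesis is secured. Once it is in hand, Theorem \ref{th: bounded antichains} applied to $(\bar M,\bar\leq)$ furnishes a definable equivalence relation $E$ on $\bar M$ with infinitely many classes together with a definable linear order on $V := \bar M/E$. Composition with the projection $M\to \bar M$ yields the desired interpretable map $\pi: M\to V$ with $V$ infinite and linearly ordered, completing the proof.
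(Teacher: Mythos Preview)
Your setup is correct and matches the paper: reduce to NIP unstable, invoke Fact~\ref{fact: partial order}, pass to the quotient partial order, and aim to apply Theorem~\ref{th: bounded antichains}. The gap is in the counting argument meant to show that an unbounded antichain contradicts few substructures.

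Your proposed encoding---place antichain elements so that their comparability pattern with a fixed chain records a subset $S\subseteq\{0,\ldots,n-1\}$---does not work, and you correctly identify why: NIP bounds the VC-dimension of the family of sets $\{c_j : b\mbox{ comparable to }c_j\}$ as $b$ varies, so you cannot realise $2^n$ patterns this way. The two escape routes you sketch (``combine several antichain--chain interactions'' and ``incomparability must then be sparse enough to force bounded antichains'') are not arguments; the first is unspecified, and the second is false in general NIP posets. As written, the crux of the proof is missing.

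The paper's construction avoids comparability-coding altogether. Instead of one chain and one antichain, it builds a \emph{tower} of antichains: pick $a_0$ in an infinite antichain $C_0$; the set $P_1=\{x:x>a_0\}$ is infinite (having first passed to a transitive piece of $P$), and if $P_1$ has bounded antichains, Theorem~\ref{th: bounded antichains} finishes; otherwise pick an infinite antichain $C_1\subseteq P_1$ and $a_1\in C_1$, and iterate. This produces $a_0<a_1<\cdots$ with each $a_i$ lying in an infinite antichain $C_i$, every element of which lies above $a_{i-1}$. Now encode \emph{compositions} of $n$ rather than subsets: for $\sigma=(m_0,\ldots,m_{k-1})$ with $\sum m_i=n$, let $A_\sigma$ consist of $m_i$ points of $C_i$ (including $a_i$) for each $i<k$. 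The invariant recovering $\sigma$ from the isomorphism type of $A_\sigma$ is purely order-theoretic and internal to $A_\sigma$: a point lies in layer $i$ exactly when the longest chain strictly below it inside $A_\sigma$ has length $i$. This gives $2^{n-1}$ pairwise non-isomorphic substructures of size $n$, the desired contradiction.
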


\begin{proof}
Let $M$ have few substructures. By Fact \ref{fact: few substructures in NIP}, $M$ is NIP. Assume that $M$ is unstable. By Fact \ref{fact: partial order}, there is a parameter-definable quasi-order $\leq$ on $M$ with an infinite chain. Let $P$ denote the (interpretable) quotient partial-order.

Replacing $P$ by a subset of it if necessary, we assume that $(P,\leq)$ is definable and transitive over some base $A$, in particular, $P$ has no maximal or minimal element. Using Lemma \ref{lem: naming parameters}, without loss, $A=\emptyset$. If $P$ has no infinite antichain, then by $\omega$-categoricity, it has bounded antichains and the result follows from Theorem \ref{th: bounded antichains}. Assume that $P$ has an infinite antichain $C_0\subseteq P$. Let $a_0 \in C_0$ and consider the set $P_1 = \{x\in P : x>a_0\}$. Then $P_1$ is infinite by the transitivity assumption. If $P_1$ has bounded antichains, we are done. Otherwise, find an infinite antichain $C_1\subseteq P_1$ and take $a_1 \in C_1$. Continue in this way producing antichains $C_i$, $i<\omega$ and points $a_i\in C_i$ such that every element of $C_j$, $j>i$, is greater than $a_i$.

Fix $n<\omega$ and let $\sigma=(m_0,\ldots,m_{k-1})$ be a finite sequence of positive integers with $n=m_0+\cdots +m_{k-1}$. We associate to $\sigma$ a finite substructure $A_\sigma\subseteq D$ of size $n$ containing exactly $m_i$ points from the antichain $C_i$, including $a_i$, and no other point. Then given $A_\sigma$ one we can recover the sequence of integers $m_i$: there are exactly $m_i$ points in $A_\sigma$ with a maximal chain of size $i$ below them in $A_\sigma$. We thus obtain $2^{n-1}$ substructures of size $n$ and this contradicts $M$ having few substructures.
\end{proof}

\ignore{

Let $M$ be $\omega$-categorical and assume that there is no polynomial $p(X)$, such that for all $n$ large enough, we have $f_M(n)\geq 2^n/p(n)$.

We say that a structure $N$ in a finite language is interpreted in $M$ in dimension 1 if there is a finite set of parameters $A$, an $A$-definable subset $X\subseteq M$, $A$-definable equivalence relation $E$ on $X$ such that there is a bijection $\tau: N\to X/E$ with the property that the image of every relation in the language of $N$ is $A$-definable in $X/E$.

\begin{lemme}\label{lem_forbidden structures}
	Assume that $M$ has few substructures then it cannot interpret in dimension 1 one of the following:
	
	1. an equivalence relation with infinitely many infinite classes and a linear order on the set of classes;
	
	2. a C-structure whose underlying tree contains a complete binary tree;
	
	3. two independent linear orders.
\end{lemme}

\begin{corollary}\label{cor: quotients are stable}
	If $E$ is an equivalence relation on a subset of $M$ with all classes infinite, then the quotient $M/E$ is stable.
\end{corollary}

\begin{lemma}
	Assume that $M$ has few substructures and let $V$ be a linear order interpretable in $M$ in dimension 1, transitive over some $A$, then the induced structure on $V$ is just DLO.
\end{lemma}
\begin{proof}
	Let $D\subseteq V$ be definable, over some parameters. If $D$ has infinitely many connected component, then for any $n$ and subset $u\subseteq [n]$, we can find a sequence $(a_i)_{i<n}$ of elements of $M$ such that the projection to $V$ of $a_i$ is in $D$ if and only if $i\in u$. This gives at least $2^n/n^k$ many substructures, where $k$ is the number of parameters used to define $V$ and $D$. It follows that $V$ is weakly o-minimal.
	
	If $V$ admits as definable equivalence relation with infinitely many convex classes, then the quotient is ordered and we contradict Lemma \ref{lem_forbidden structures} (1). It follows that if $V$ is transitive over some $B$, then it is primitive over $B$. If follows from either \cite{rank1} or \cite{weakly o-minimal omega cat} that any 2-type in $\overline V$ is dense. By Lemma \ref{lem_forbidden structures} (3), no point of $V$ can have any other element of $\overline V$ in its algebraic closure. This stays true if we add parameters (restricting to a transitive interval of $V$) and by induction, $V$ has no additional induced structure.
\end{proof}

\begin{lemma}\label{lem: exchange lemma}
	Assume that $M$ has few substructures and let $V$ be a linear order interpretable in $M$ in dimension 1 over some $A$. Let $l\in M$. Assume there is $c\in \overline V$ such that $c\in \acl(Al)\setminus \acl(A)$, then $l\in \acl(Ac)\setminus \acl(A)$.
\end{lemma}
\begin{proof}
	(Approximate) Let $f$ map $l$ to $c$, then we get an equivalence relation with ordered classes by looking at $f(l)=f(l')$.
\end{proof}

Let $M$ have few substructures.

Let $A\subseteq M$ be a finite set of parameters and let $l_a$ be a family of infinite subsets of some $D\subseteq M$ such that $X_a$ is defined over $Aa$, $a\models p$ a singleton, each $X_a$ endowed (uniformly) with an $Aa$-definable equivalence $E_a$ and the quotient $X_a/E_a$ has an $Aa$-definable linear order $\leq_a$. Assume further that each $X_a$ is transitive over $Aa$. We denote by $V_a$ the quotient $X_a/E_a$.

By Lemma \ref{lem_forbidden structures} (1), almost all $E_a$-classes are finite. By transitivity, they all have the same finite size $N$. Assume that $N$ is chosen to be minimal and $|A|$ is chosen minimal for this value of $N$.

(If $M$ is primitive, we might be able to force $N=1$.)

\underline{Claim 0}: If $a,a'\models p$, then there is at most one element of $\overline {V_a}$ definable over $Aaa'$.

\smallskip
\emph{Proof}: Otherwise, we get dl-dimension 2.

\smallskip
\underline{Claim 1}: For any $l\in F$, the $E_l$-classes are composed of elements equi-algebraic over $A$.

\smallskip
\emph{Proof}: Assume not. Let $l\in F$ and $b\in l$. Let $c$ in the $E_l$-class of $b$ and assume that $c\notin \acl(Ab)$. Then there is $c'\equiv_{b} c$ such that $c'\notin \acl(Alb)$. Let $l'$ contain $b$ and $c'$ in the same $E_{l'}$ class. Then $l'\notin \acl(Alb)$. But then by Lemma \ref{lem: exchange lemma}, $b\notin \acl(All')$. So $l\cap l'$ contains the whole $E_l$-class of $b$. Then the $E_l$ and $E_{l'}$ classes of $b$ coincide, which is a contradiction since $c'$ is in the latter and not in the former.

\smallskip
From now on, we assume that all classes are closed under inter-algebraicity (over $A$). Let $l,l'$ be two distinct lines. If $l\cap l'$ is finite then by Claim 0, it is composed of one element (up to inter-algebraicity). Assume that $l\cap l'$ is infinite. Then again by Claim 0, either $l$, $l'$ share an initial segment, or they share a final segment, or an initial segment of one is a final segment of the other (possibly up to reversing the order on one of them). Assume $l,l'$ share an initial segment $l_0$ which is a proper initial segment of both. Then by transitivity of $V_l$, $l'$ is not algebraic over $l$ and for any initial segment of $l'$, there is $l''$ which shares that initial segment of $l'$ and is not algebraic over $Acl'$. Taking $l'_0$ to contain $l_0$, we see that $l\cap l''=l\cap l'$. This contradicts Lemma \ref{lem: exchange lemma} since we should have $l',l''\in \acl(Ac)$.

Hence the only way $l\cap l'$ can be infinite is if one prolonges the other (up to reversing). We can therefore glue together all the others starting at $l$ into a linear or circular order $W_l$, definable over $l$. Since any small enough interval of $W_l$ is contained in an order $l'$ which is transitive and over which $W_l$ is defined, we see that the canonical base of $W_l$ (over $A$) cannot define any cut. Hence $W_l$ is minimal over its canonical base. We may replace the family $F$ by that of the $W_l$'s and assume that any two lines $l,l'\in F$ have intersection of cardinality at most 1.

Take $b\in D$ and let $F_b$ denote the set of lines containing $b$. Consider the set $D_b$ of points $x\in D$ such that there is a line $l\in F_b$ containing $x$. Then the relation of being in a line containing $x$ is an equivalence relation on $D_b$. The classes of this relation cannot be ordered. It follows that any line $l$ can only intersect finitely many lines containing $b$. Hence the intersection points of those lines are algebraic over $b,l$. By Claim 0, there can be only one such point. Given a line $l$, the map that sends a point $b$ to the only point in $l\cap \acl(Abl)$ can have only finite fibers. Therefore if $b\notin \acl(l)$, any line containing $b$ is disjoint from $l$. So there is only one line through a point $b$ and lines form a partition of $D$.

The lines in the partition must be independent: if two are intertwined, we contradict weak o-minimality of definable orders. If two are in definable bijection, the same argument leads to $2^n/p(n)$ substructures (or just we get an equivalence relation with ordered classes). We conclude that there is no additional structure (over $A$) on $D$, or rather on the quotient of $D$ by the relation of equi-algebraicity (which has finite classes). There can be relations between the fibers of the cover, but after naming any finite number of them, the automorphism group must still act transitively on each fiber. If the growth is sub-exponential, then fibers must have size one.

\smallskip
Now, we follow the same arguments assuming that we start with a 1-dimensional family of blocs $X_p$, which over the parameter $p$ are equipped with a partition into minimal orders.

Claim 0 goes through with blocs instead of lines (up to inter-algebraicity), as does claim 1. Consider two blocs $b$ and $b'$. If they have finite intersection, that intersection has size one. Assume they have infinite intersection. Then $b$ can only know one line of $b'$ and conversely so the intersection has to be everything outside of possibly one line in each (NOT TRUE: there could be algebraicity for example). The lines of $b$ (except for possibly one) have to remain minimal and independent over $b'$ and conversely. It follows that if $b$ and $b'$ intersect in more than one line, then they are equal, outside of possibly one line in each. The remaining lines can be disjoint, prolonge each other, or meet at one point. Consider now the family of lines (prolonging lines) as above. We still have that two lines can only meet in one point, since if they are two points, then those points are algebraic over any bloc intersecting the line in an interval. (If those points are not algebraic over the bloc, then it means that any point in the line intersects some line in the bloc and then we get an ordered equivalence relation.) Hence we can argue as above that in fact lines cannot meet at all.

Iterating, we obtain a family of lines defined over $\emptyset$ and partitioning the space (up to a quotient with finite classes in the exponential case).

\section{New proofs}

Let $M$ have few substructures.

\subsection{Tameness of definable orders}

\begin{lemme}\label{lem: weak o-minimal}
	Let $(V,\leq)$ be a linear order, interpreted in dimension 1, then any parameter-definable subset of $V$ is a finite union of convex sets.
\end{lemme}
\begin{proof}
	Write $V=\pi(D)$, where $D\subseteq M$ is definable. Let $X\subseteq V$ be definable and assume that $X$ is not a finite union of convex sets. Then for any $\sigma \in {}^n 2$, we can find points $(a^\sigma_i:i<\omega)$ in $D$ such that $\pi(a^\sigma_i)$ is in $X$ if and only if $\sigma(i)=0$. Let $A$ be a set of parameters over which $D,V,\pi$ and $X$ are definable. Then the sets $A_\sigma = A\cup \{a^\sigma_i:i<\omega\}$ determine at least $2^n/n^{k}$ distinct substructures, where $k=|A|$.
\end{proof}

\begin{lemma}\label{lem: no functions to orders}
	Let $A$ be a finite set of parameters. Let $X\subseteq M$ be definable and transitive over $A$, $D\subseteq M$, and $\pi:D\to V$ interpretable over $A$, where $V$ has an $A$-definable linear order $\leq$. Let $f:X\to \overline V$ an $A$-definable function. Then either $\pi\circ f$ is constant, or $X\subseteq D$ and $\pi\circ f = \pi$.
\end{lemma}
\begin{proof}
	If the image of $\pi\circ f$ is finite, then it has one element by transitivity of $X$, since any finite subset of $\overline V$ is rigid over $A$. Assume that the image of $\pi\circ f$ is infinite. If $X$ is not a subset of $D$, let $(a_i:i<\omega)$ be a sequence of points in $X$ with $f(a_i)<f(a_{i+1})$ and define for $\sigma \in {}^n 2$, the set \[A_\sigma = A \cup \{a_i:\sigma(i)=0\} \cup \{\pi^{-1}(f(a_i)) :\sigma(i)=1\}\] (where we abuse notations by letting $\pi^{-1}(b)$ stand for any element whose image by $\pi$ is $b$). The sets $A_\sigma$ determine at least $2^{n}/p(n)$ many distinct substructures of size $n+|A|$.
	
	Assume that $X\subseteq D$ and $\pi\circ f\neq \pi$. Then without loss of generality (using transitivity of $X$), we have $\pi\circ f(a)>\pi(a)$ for all $a\in X$. Let $\sigma \in {}^n 2$ and build a sequence $(c_i:i<n)$ of elements of $X$ inductively: let $c_0$ be any element of $X$. Having built $c_i$, choose $c_{i+1}\in X$ so that $\pi(c_{i+1})>\pi(c_i)$ and \[\pi(c_{i+1})<\pi\circ f(c_i) \iff \sigma(i)=0.\]

Adding $A$ to this sequence, we again get at least $2^n/p(n)$ many substructures of size $n+|A|$.
\end{proof}

It follows in particular that if $V$ is transitive, then any parameter-definable cut of $V$ has an end-point, hence the induced structure on $V$ is o-minimal.

\begin{prop}\label{prop: lines are dlo}
	Let $(V,\leq)$ be a linear order interpreted in dimension 1 over some $A$. Then there is a decomposition $V=V_1+\cdots +V_n$ into convex pieces such that over $A$, each $V_i$ is either finite or a pure DLO and there are no extra $A$-definable relations on $V$.
\end{prop}
\begin{proof}
	By Lemma \ref{lem: weak o-minimal}, any definable subset of $V$ is a finite union of convex sets. Let $V=V_1+\cdots+V_n$ be the decomposition of $V$ into complete types over $A$. We have to show that there is no extra structure on $V$ definable over $A$. We first show that there are no extra two types. Let $\phi(x,y)$ be an $A$-definable subset of $V^2$. For a fixed $a\in V$, the set $\phi(a,y)$ is a definable subset of $V$ and as such is a finite union of convex subsets. Any cut defined by those subsets is an element of $\overline V$ definable over $Aa$. By Lemma \ref{lem: no functions to orders}, such an element must be either equal to $a$ or definable over $A$ (indeed the projection to $V$ of an element of $A$). It follows that the intersection of $\phi(a,y)$ with any $V_i$ is equal to the intersection with $V_i$ of a set of the form $a \square y$, where $\square \in \{=,\neq,<,>,\leq,\geq\}$. Therefore $\phi(x,y)$ is quantifier-free definable from unary predicates for the $V_i$'s and the order.
	
	Since is true over any $A$, so by an immediate induction, we obtain that any subset of $V^n$ definable over $A$ is quantifier-free definable from unary predicates for the $V_i$'s and the order, which proves the lemma.
\end{proof}

\begin{prop}
	Let $A$ be a finite set of parameters, $D\subseteq M$ definable over $A$ and $\pi:D \to V$ interpretable over $A$ so that $V$ admits an $A$-definable linear order. Assume that $V$ is infinite and $D$ is transitive over $A$. Then $\pi$ has finite fibers and any formula $\phi(x_1,\ldots,x_n)$ definable over $AB$, with variables ranging in $D$, is equivalent to a finite boolean combination of formulas of the form:
	
	$\bullet$ $\pi(x_i)< \pi(a)$, $a\in B\cap D$;
	
	$\bullet$ $\pi(x_i) \mathrel{\square} \pi(x_j)$, $\square \in \{=,\leq\}$;
	
	$\bullet$ $x_i = a'$, $a'\sim a$ for some $a\in B\cap D$.
	
	\textbf{Missing: relations between fibers}
\end{prop}

}

\bibliography{tout.bib}

\end{document}